\def\append@label@year@{%
    \safe@set\@tempcnta\bib@year
    \edef\bib@citeyear{\the\@tempcnta}%
    \ifnum\bib@citeyear>9
      \append@to@stem{%
          \ifx\bib@year\@empty
          \else
            \@xp\year@short \bib@citeyear \@nil
          \fi
      }%
    \fi
}
\let\oldtocsection=\tocsection
\renewcommand{\tocsection}[2]{\hspace{0em}\oldtocsection{#1}{#2}}
\def\upddots{\mathinner{\mkern 1mu\raise 1pt \hbox{.}\mkern 2mu
\mkern 2mu \raise 4pt\hbox{.}\mkern 1mu \raise 7pt\vbox {\kern 7
pt\hbox{.}}} }
\numberwithin{equation}{section}
\begin{document}
\setlength{\unitlength}{2.5cm}

\newtheorem{thm}{Theorem}[section]
\newtheorem{lm}[thm]{Lemma}
\newtheorem{prop}[thm]{Proposition}
\newtheorem{cor}[thm]{Corollary}
\newtheorem{conj}[thm]{Conjecture}
\newtheorem{specu}[thm]{Speculation}

\theoremstyle{definition}
\newtheorem{dfn}[thm]{Definition}
\newtheorem{eg}[thm]{Example}
\newtheorem{rmk}[thm]{Remark}

\newcommand{\N}{\mathbbm{N}}
\newcommand{\R}{\mathbbm{R}}
\newcommand{\C}{\mathbbm{C}}
\newcommand{\Z}{\mathbbm{Z}}
\newcommand{\Q}{\mathbbm{Q}}

\newcommand{\Mp}{{\rm Mp}}
\newcommand{\Sp}{{\rm Sp}}
\newcommand{\GSp}{{\rm GSp}}
\newcommand{\GL}{{\rm GL}}
\newcommand{\PGL}{{\rm PGL}}
\newcommand{\SL}{{\rm SL}}
\newcommand{\SO}{{\rm SO}}
\newcommand{\Spin}{{\rm Spin}}
\newcommand{\GSpin}{{\rm GSpin}}
\newcommand{\Ind}{{\rm Ind}}
\newcommand{\Res}{{\rm Res}}
\newcommand{\Hom}{{\rm Hom}}
\newcommand{\End}{{\rm End}}
\newcommand{\msc}[1]{\mathscr{#1}}
\newcommand{\mfr}[1]{\mathfrak{#1}}
\newcommand{\mca}[1]{\mathcal{#1}}
\newcommand{\mbf}[1]{{\bf #1}}

\newcommand{\mbm}[1]{\mathbbm{#1}}

\newcommand{\into}{\hookrightarrow}
\newcommand{\onto}{\twoheadrightarrow}

\newcommand{\s}{\mathbf{s}}
\newcommand{\cc}{\mathbf{c}}
\newcommand{\bfa}{\mathbf{a}}
\newcommand{\id}{{\rm id}}
\newcommand{\g}{\mathbf{g}_{\psi^{-1}}}
\newcommand{\w}{\mathbbm{w}}
\newcommand{\Ftn}{{\sf Ftn}}
\newcommand{\p}{\mathbf{p}}
\newcommand{\bq}{\mathbf{q}}
\newcommand{\WD}{\text{WD}}
\newcommand{\W}{\text{W}}
\newcommand{\Wh}{{\rm Wh}}
\newcommand{\ggma}{\omega}
\newcommand{\sct}{\text{\rm sc}}
\newcommand{\Of}{\mca{O}^\digamma}
\newcommand{\gk}{c_{\sf gk}}
\newcommand{\Irr}{ {\rm Irr} }
\newcommand{\Irrg}{ {\rm Irr}_{\rm gen} }
\newcommand{\diag}{{\rm diag}}
\newcommand{\uchi}{ \underline{\chi} }
\newcommand{\Tr}{ {\rm Tr} }
\newcommand{\der}\de
\newcommand{\Stab}{{\rm Stab}}
\newcommand{\Ker}{{\rm Ker}}
\newcommand{\bfp}{\mathbf{p}}
\newcommand{\bfq}{\mathbf{q}}
\newcommand{\KP}{{\rm KP}}
\newcommand{\Sav}{{\rm Sav}}
\newcommand{\de}{{\rm der}}
\newcommand{\tnu}{{\tilde{\nu}}}
\newcommand{\lest}{\leqslant}
\newcommand{\gest}{\geqslant}
\newcommand{\tu}{\widetilde}
\newcommand{\tchi}{\tilde{\chi}}
\newcommand{\tomega}{\tilde{\omega}}
\newcommand{\Rep}{{\rm Rep}}
\newcommand{\A}{{\mbf A}}
\newcommand{\BDI}{{\rm Inv}_{\rm BD}}

\newcommand{\cu}[1]{\textsc{\underline{#1}}}
\newcommand{\set}[1]{\left\{#1\right\}}
\newcommand{\ul}[1]{\underline{#1}}
\newcommand{\wt}[1]{\overline{#1}}
\newcommand{\wtsf}[1]{\wt{\sf #1}}
\newcommand{\anga}[1]{{\left\langle #1 \right\rangle}}
\newcommand{\angb}[2]{{\left\langle #1, #2 \right\rangle}}
\newcommand{\wm}[1]{\wt{\mbf{#1}}}
\newcommand{\elt}[1]{\pmb{\big[} #1\pmb{\big]} }
\newcommand{\ceil}[1]{\left\lceil #1 \right\rceil}
\newcommand{\floor}[1]{\left\lfloor #1 \right\rfloor}
\newcommand{\val}[1]{\left| #1 \right|}
\newcommand{\bepsilon}{\overline{\epsilon}}
\newcommand{\HH}{\mca{H}}

\newcommand{\exc}{ {\rm exc} }

\newcommand{\motimes}{\text{\raisebox{0.25ex}{\scalebox{0.8}{$\bigotimes$}}}}
\makeatletter
\newcommand{\extp}{\@ifnextchar^\@extp{\@extp^{\,}}}
\def\@extp^#1{\mathop{\bigwedge\nolimits^{\!#1}}}
\makeatother

\newcommand{\nequiv}{\not \equiv}
\newcommand{\half}{\frac{1}{2}}
\newcommand{\psii}{\widetilde{\psi}}
\newcommand{\ab} {|\!|}
\newcommand{\mb}{{\widetilde{B(\F)}}}

\title[Two-sided cells and splitting Whittaker polynomials]{Two-sided cells of Weyl groups and certain splitting Whittaker polynomials}

\author{Fan Gao and Yannan Qiu}

\address{Fan Gao: School of Mathematical Sciences, Zhejiang University, 866 Yuhangtang Road, Hangzhou, China 310058}
\email{gaofan@zju.edu.cn}
\address{Yannan Qiu: School of Mathematics, Hangzhou Normal University, 2318 Yuhangtang Road, Hangzhou, China 311121}
\email{qiuyannan@hznu.edu.cn}

\subjclass[2010]{Primary 20F55, 11F70; Secondary 22E50}
\keywords{two-sided cells, covering groups, Whittaker polynomials}
\maketitle

\begin{abstract} 
Consider the subset of a Weyl group with a fixed special descent set. For Weyl groups of classical types, we determine the number of two-sided cells this subset intersect. Moreover, we apply this result to prove that certain rational Whittaker polynomials associated with covering groups split over the field of rational numbers. 
\end{abstract}

\tableofcontents

\section{Introduction} \label{S:Intro}

Let $W$ be the Weyl group associated with a root system with simple roots 
$$\Delta=\set{\alpha_i: 1\lest i \lest r}.$$
Let $l: W \to \N$ be the length function of $W$. The length $l(w)$ of $w$ is the minimum number of simple reflections $s_{\alpha_i}$, where $\alpha_i \in \Delta$, that appear in a decomposition of $w$. Then one has the Poincar\'e series
$$\mca{P}_W(X):= \sum_{w\in W} X^{l(w)}.$$
Many properties of $\mca{P}_W(X)$ are given for example in \cite[Page 42, Exercise (26)]{BouL2}.

Meanwhile, there is another Poincar\'e series associated with $W$ given as follows. First, let $\Phi$ be the set of roots of the root system underlying $W$. For each $\alpha \in \Phi$, one has the reflection $s_\alpha \in W$. Every $w\in W$ can be written as a product of reflections $\prod_{j=1}^k s_\alpha$, where $s_\alpha$, for $\alpha \in \Phi$, is a reflection that is not necessarily simple. We denote by $l^\sharp(w)$ the minimal number of reflections needed in the decomposition of $w$ as above. This gives a well-defined function 
$$l^\sharp: W \to \N.$$
and thus a modified Poincar\'e series
$$\mca{P}_W^\sharp(X):= \sum_{w\in W} X^{l^\sharp(w)}.$$
Here, the polynomial $\mca{P}_W^\sharp(X)$ and its generalization form one of the foci of our paper.
The series $\mca{P}_W^\sharp(X)$ also satisfies some nice properties, and its study goes back  to at least to the work of Shephard--Todd \cite[\S 5]{ShTo54}.  In particular, it was shown in loc. cit. that
$$\mca{P}_W^\sharp(X) = \prod_{i=1}^r (1 + m_i X),$$
where $m_i, 1\lest i \lest r$ are the exponents of the Weyl group.

The polynomial $\mca{P}_W^\sharp(X)$ appeared in many different and related contexts, for example regarding the hyperplane arrangement, the Euler-Poincar\'e characteristic of the Arnold--Brieskorn manifold, the cohomology of a certain affine Springer fiber; see the work of Sommers \cite{Som97} for an excellent exposition on the connections between these topics. In fact, it was explained in loc. cit. that for ``very good" $n\in \N$ (in the sense of \cite[Definition 3.5]{Som97}) the value of $\mca{P}_W^\sharp(n)$ is dictated by the natural permutation representation $\eta_n$ of $W$ on $\Z[\Delta^\vee]/\Z[n\Delta^\vee]$, where $\Delta^\vee$ denotes the set of simple coroots. More precisely, writing $\chi_{\eta_n}$ for the character of $\eta_n$, one has that for very good $n$,
$$\chi_{\eta_n}(w) = n^{r - l^\sharp(w)}$$
for every $w\in W$. By further exploring this relation, it was shown in \cite[Theorem 3.1]{GGK2} that for very good $n$, the value
$$ \frac{n^r \cdot \mca{P}_W^\sharp(n^{-1})}{\val{W}}= \frac{1}{\val{W}} \cdot \prod_{i=1}^r (n + m_i)$$
is equal to the Whittaker dimension (see \eqref{D:Wh} below) of the  Steinberg representation of the $n$-fold cover $\wt{G}^{(n)}$ of an almost simple simply-connected group $G$, whose root system is of type $\Delta$.
In parallel, the value 
$$\frac{n^r \cdot \mca{P}_W^\sharp(-n^{-1})}{\val{W}}= \frac{1}{\val{W}} \cdot \prod_{i=1}^r (n - m_i)$$
is the Whittaker dimension of the theta representation of $\wt{G}^{(n)}$ for all ``very good" $n$. 

The Steinberg representation and the theta representation mentioned above are both constituents of an unramified genuine principal series $I(\chi_{\rm ex})$ of $\wt{G}^{(n)}$, see \S \ref{SS:rps} for details. The representation $I(\chi_{\rm ex})$ is multiplicity-free and we denote by  ${\rm JH}(I(\chi_{\rm ex}))$ its Jordan--Holder set. Let $\msc{P}(\Delta)$ be the power set of $\Delta$. By a result of Rodier, there is a natural bijection
$$\msc{P}(\Delta) \longrightarrow {\rm JH}(I(\chi_{\rm ex}))$$
denoted by $S \mapsto \pi_S$, where the Steinberg representation and the theta representation are just $\pi_\emptyset$  and $\pi_\Delta$ respectively; see \S \ref{SS:rps}. In view of the above results, we call
$$\mca{P}_{G, \emptyset}(X)= \frac{1}{\val{W}} \cdot \prod_{i=1}^r (X + m_i), \quad \mca{P}_{G, \Delta}(X)= \frac{1}{\val{W}} \cdot \prod_{i=1}^r (X - m_i)$$
the Whittaker polynomial associated with $\emptyset \in \msc{P}(\Delta)$ and $\Delta \in \msc{P}(\Delta)$ respectively. As mentioned, the utility of these two polynomials is that the evaluation $\mca{P}_{G,\emptyset}(n)$ and $\mca{P}_{G,\Delta}(n)$ at ``very good" $n$ gives the values of the Whittaker dimensions.

In general, for every $S \in \msc{P}(\Delta)$, one has a polynomial $\mca{P}_{G, S}(X) \in \Q[X]$ such that $\mca{P}_{G,S}(n)$ is equal to the Whittaker dimension of $\pi_S$ for all very good $n$. It is natural to ask the following question:
\begin{enumerate}
\item[$\bullet$] (Q0) For fixed $G$, determine $S \in \msc{P}(\Delta)$ such that $\mca{P}_{G,S}(X) \in \Q[X]$ is a split polynomial over $\Q$.
\end{enumerate}
Our paper is thus motivated by (Q0) above, and the goal is to give a description of such $S$.

\subsection{Main result}
For every $S \in \msc{P}(\Delta)$, consider the set $\mca{C}_S \subseteq W$ given in \eqref{D:C_S} and denote by $\mfr{C}^{\rm LR}(S)$ the set of two-sided cells of $W$ that intersect $\mca{C}_S$. It turns out that the splitting property of $\mca{P}_{G,S}(X)$ over $\Q$ depends sensitively on the size of $\mfr{C}^{\rm LR}(S)$. Such a relation is implied by the work of Gyoja--Nishiyama--Shimura \cite{GNS99} as we explain in detail in \S \ref{S:WhP}. Roughly speaking, it is desirable to have  $\val{\mfr{C}^{\rm LR}(S)}$ to be as small as possible, in order to get $\mca{P}_{G,S}(X)$ to be splitting.

Motivated by this, in \S \ref{S:cells} we focus on root system $\Delta$ of type $A_r, B_r$ and $D_r$, and we use the Lusztig $\bfa$-function and the Springer correspondence to study the set $\mfr{C}^{\rm LR}(S_j)$, where $S_j:=\set{\alpha_i: \ 1\lest i \lest j} \subseteq \Delta, 0\lest j \lest r$ with the $\alpha_i$'s labelled as by Bourbaki \cite{BouL2}. Here we take $S_0:=\emptyset$ by convention.  The main result of \S \ref{S:cells}  is the following

\begin{thm}[{Theorem \ref{T:ABD}}] \label{T:ABD0} 
Let $\Delta$ be of type $A_r, B_r$ or $D_r$ and let $S_j \in \msc{P}(\Delta), 1\lest j \lest r-1$. Then the following data are determined explicitly:
\begin{enumerate}
\item[--] the size of $\mfr{C}^{\rm LR}(S_j)$;
\item[--] the set $\bfa(\mca{C}_{S_j})$ of values of the $\bfa$-function on $\mca{C}_{S_j}$;
\item[--] the set of special nilpotent orbits associated with the two-sided cells in  $\mfr{C}^{\rm LR}(S_j)$.
\end{enumerate}
More precise results are tabulated in Tables \ref{T:A}, \ref{T:B}, \ref{T:D-odd} and \ref{T:D-even}.
\end{thm}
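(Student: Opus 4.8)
The plan is to translate all three invariants in the statement into the combinatorics of the (generalized) Robinson--Schensted correspondence, exploiting two facts: the $\bfa$-function and the attached special orbit are constant along each two-sided cell, whereas membership of $w$ in the descent class $\mca{C}_{S_j}$ is governed by a single standard (resp.\ domino) tableau attached to $w$. Since $S_j=\set{\alpha_1,\dots,\alpha_j}$ is an \emph{interval} of simple roots, the descent condition defining $\mca{C}_{S_j}$ will impose a correspondingly rigid shape condition, which is what ultimately makes $\val{\mfr{C}^{\rm LR}(S_j)}$ computable.

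\textbf{Type $A_r$.} I would first dispose of $W=S_{r+1}$. Here the classical Robinson--Schensted map $w\mapsto(P(w),Q(w))$ produces a pair of standard Young tableaux of a common shape $\lambda\vdash r+1$, and by Kazhdan--Lusztig the two-sided cell of $w$ is precisely the locus ``$\mathrm{shape}=\lambda$''. The crucial link is that the descent set of $w$ equals the descent set of the recording tableau; hence a cell $c_\lambda$ lies in $\mfr{C}^{\rm LR}(S_j)$ if and only if some standard Young tableau of shape $\lambda$ realizes the descent pattern cut out by $S_j$. A direct analysis of which $\lambda$ admit such a tableau (using that $S_j$ is an interval) enumerates $\mfr{C}^{\rm LR}(S_j)$; on each such cell the $\bfa$-value is the constant $n(\lambda)=\sum_i(i-1)\lambda_i$, giving $\bfa(\mca{C}_{S_j})$, and the corresponding special orbit (every orbit is special in type $A$) is read off from $\lambda$ through the Springer correspondence.

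\textbf{Types $B_r$ and $D_r$.} The same skeleton applies, but now through the domino/signed-tableau version of Robinson--Schensted (Garfinkle, Barbasch--Vogan): two-sided cells are indexed by special symbols, equivalently by special nilpotent orbits, and the descent set of a signed permutation is again recoverable from its recording domino tableau. The problem reduces to determining which special symbols admit a recording tableau carrying the descent pattern of $S_j$; Lusztig's explicit formula for the $\bfa$-value of a special symbol (equivalently, the dimension of the Springer fibre of the associated orbit) then yields $\bfa(\mca{C}_{S_j})$ and the list of orbits. Type $D_r$ must be split according to the parity of $r$, because the degenerate (``very even'') symbols and the order-two diagram automorphism behave differently in the two cases; this is the source of the separate Tables \ref{T:D-odd} and \ref{T:D-even}.

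\textbf{Main obstacle.} Type $A$ is essentially bookkeeping; the real work is in types $B$ and $D$, where the transparent ``descents of $w$ $=$ descents of the recording tableau'' dictionary of type $A$ is replaced by the considerably more delicate domino-tableau analysis, and where one must track the $\bfa$-value and the special orbit uniformly across the parity cases of $D_r$. I expect the bulk of the argument to be the explicit enumeration of admissible symbols together with the verification that the interval descent constraint imposed by $S_j$ translates exactly into the shape/symbol conditions recorded in Tables \ref{T:A}, \ref{T:B}, \ref{T:D-odd} and \ref{T:D-even}.
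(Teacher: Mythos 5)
Your strategy is genuinely different from the paper's, and its core reduction is sound: since the $\bfa$-function and the attached special orbit are constant on two-sided cells, while membership in $\mca{C}_{S_j}$ is a condition on the insertion tableau, the set $\mfr{C}^{\rm LR}(S_j)$ is exactly the set of (cycle-classes of) shapes admitting a tableau with descent set $S_j$. In type $A$ your argument is essentially complete: only the hook shape survives the interval descent condition, giving a single cell with $\bfa=j(j+1)/2$, in agreement with Table \ref{T:A}. But as written the proposal has two concrete gaps. First, for types $B$ and $D$ the enumeration you defer is not a technical afterthought; it \emph{is} the theorem. Nothing in the sketch indicates how the domino analysis would produce the count $\floor{j/2}+1$ and the orbits $\mca{O}_{(2j+1-2k,2k+1,1^{2r-2j-1})}$ of Table \ref{T:B}, or the parity-dependent answers of Tables \ref{T:D-odd} and \ref{T:D-even}. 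Second, your type $D$ foundations need sourcing: Garfinkle's published series covers types $B$/$C$ only, so the dictionary ``two-sided cells correspond to special shapes up to moving through cycles, with descent sets readable from the tableau'' requires its later extension to type $D$ (due to McGovern), together with a careful treatment of open versus closed cycles and of the very even orbits $\mca{O}^{I}_{(r,r)}$, $\mca{O}^{II}_{(r,r)}$; without that input the type $D$ half of the argument has no support.

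For comparison, the paper avoids the cell classification entirely and runs a squeeze argument. By Proposition \ref{P:dual}, every $x\in\mca{C}_S$ satisfies $w_{S^*}w_\Delta \lest_{\rm R} x \lest_{\rm R} w_S$, so monotonicity of the $\bfa$-function along $\lest_{\rm R}$ (Lemma \ref{L:YN12}) confines $\bfa(\mca{C}_{S_j})$ to the integer interval between the two extreme values $\bfa(w_{S_j^*})$ and $\bfa(w_{S_j}w_\Delta)$. Explicit elements built from the words $\mbm{b}_{i,q}$ and $\mbm{d}_{i,q}$ of \eqref{D:bb} and \eqref{D:dd} realize every value in that interval, with their $\bfa$-values and attached orbits imported from \cite{BMW} and \cite{BXX23}; and the order-preserving injection of Lemma \ref{L:co} into the special orbits between $\mca{O}^S_{\rm min}$ and $\mca{O}^S_{\rm max}$ supplies the matching upper bound by counting special partitions. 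This route needs only two extreme values, a supply of representatives, and a count, and it yields the explicit representatives $X_j$ of the tables as a by-product. Your route, if completed, would buy something the paper's does not: it computes $\mfr{C}^{\rm LR}(S)$ for an \emph{arbitrary} descent class $S$, not just the initial intervals $S_j$, and it needs no guessed representatives. But to turn it into a proof you must carry out the shape enumeration in types $B$ and $D$ and cite (or prove) the type $D$ cell--tableau dictionary.
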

The proof of Theorem \ref{T:ABD} follows from a case by case analysis. For $\Delta$ of type $A_r$ we also have further result regarding $\mfr{C}^{\rm LR}(T_j)$ for $2\lest j \lest r-1$, where $T_j:=S_j -\set{\alpha_1}$; see Proposition \ref{P:Amore}. Partially relying on these results, we prove in \S\ref{S:splWhP}  the second main result of our paper:

\begin{thm}[{Theorem \ref{T:poly}}] \label{T:poly0}
Let $G$ be simply-connected and almost simple. For every $S \in \msc{P}(\Delta)_\flat \cup \msc{P}(\Delta)_\flat^*$ defined in \eqref{D:Pflat} and \eqref{D:Pflat*}, the Whittaker polynomial $\mca{P}_{G,S}(X) \in \Q[X]$ splits over $\Q$.
\end{thm}

In fact, in view of the definition of $\msc{P}(\Delta)_\flat$ and $\msc{P}(\Delta)_\flat^*$, it suffices to consider $\mca{P}_{G,S}(X)$ for $S \in \msc{P}(\Delta)_\flat$. Moreover, the only non-trivial cases that need to be analyzed are types  $A_r, B_r, C_r, D_r$ and $G_2$. For type $A_r, B_r$ and $C_r$, we apply Theorem \ref{T:ABD0}, Proposition \ref{P:Amore} and the result of Gyoja--Nishiyama--Shimura \cite{GNS99} mentioned above. The case of $D_r$ follows from a more direct computation, while  the result for type $G_2$ can be directly extracted from \cite{Ga6}.  The discussion in each case, whenever $\mca{P}_{G,S}(X)$ splits, also gives more precise information on the roots of $\mca{P}_{G,S}(X)$.

We expect that the converse of Theorem \ref{T:poly0} also holds, and thus $\mca{P}_{G,S}(X)$ splits if and only if $S \in \msc{P}(\Delta)_\flat \cup \msc{P}(\Delta)_\flat^*$, see the brief discussion in \S \ref{SS:spec}. We also need to add that, as pointed out by the referee, there are very intriguing connections between these Whittaker polynomials $\mca{P}_{G,S}(X)$ and the Kirkman numbers and polynomials as discussed in the work of Armstrong--Reiner--Rhoades, see especially \cite[\S 9]{ARR15}. Also, it would be interesting to exploit further connection between the splitting properties of $\mca{P}_{G,S}(X)$ and certain exponents of the restricted hyperplane arrangements, 

Through out the paper, we have $\N:= \Z_{\gest 0}$ and for every $x, y\in \R$, we write
$$[x, y]_\N:=[x, y] \cap \N.$$
Also, $\floor{x} \in \Z$ denotes the integral part of $x\in \R$.
\subsection{Acknowledgement} 

We would like to thank Nanhua Xi, Zhanqiang Bai, Xun Xie for very helpful discussions on the topics of cells and the $\bfa$-function. Thanks are also due to the referee for some very insightful comments and suggestions.

The work of Y. Q. is partially supported by NSFC-12171030 and LQ24A010010.
The work of F. G. is partially supported by the National Key R\&D Program of China (No. 2022YFA1005300) and also by NSFC-12171422.

\section{Two-sided cells intersecting $\mca{C}_S$} \label{S:cells}

\subsection{Cells of $W$}
Let $W$ be the Weyl group of an irreducible reduced root system with simple roots 
$$\Delta=\set{\alpha_i: \ 1\lest i \lest r}.$$ Thus, $W$ is generated by the simple reflections $s_\alpha, \alpha \in \Delta$. 
Denote by $\Irr(W)$ the set of isomorphism classes of irreducible representations of $W$.
We write $\mbm{1}_W$ and $\varepsilon_W$ for the trivial and sign characters of $W$ respectively.
For a finite-dimensional representation $\sigma$ of $W$, we write ${\rm JH}(\sigma)$ for the Jordan--Holder set of irreducible constituents of $\sigma$, counted without multiplicities.

Let $l: W \to \N$ be the usual length function of $W$ defined by $\Delta$. For every $w$, we have the left and right descent set of $w$ given as follows
$${\rm Desc}_{\rm L}(w):=\set{\alpha \in \Delta: \ l(s_\alpha w) < l(w)}, \quad {\rm Desc}_{\rm R}(w):=\set{\alpha \in \Delta: \ l(w s_\alpha) < l(w)}.$$ 
Using the Bruhat order, the Kazhdan--Lusztig polynomial and the descent set function ${\rm Desc}_{\rm L}$ (resp. ${\rm Desc}_{\rm R}$), one has preorders  $\lest_{\rm L}$ and $\lest_{\rm R}$ defined on $W$, see \cite{KL1}. Define an equivalence relation
$x \sim_{\rm L} y$
by requiring $x \lest_{\rm L} y$ and $y \lest_{\rm L} x$; similarly we have $x \sim_{\rm R} y$ using $\lest_{\rm R}$.
 The resulting equivalence classes are called the left and right cells of $W$ respectively. We have 
$$x \sim_{\rm L} y  \text{ if and only if } x^{-1} \sim_{\rm R} y^{-1}.$$
One can further combine the left and right equivalence above and write
$$x \sim_{\rm LR} y$$
if there exists $z\in W$ such that $x \sim_{\rm L} z\sim_{\rm R} y$ holds (see \cite[Page 137 and Corollary 12.16]{Lus84-B}). The resulting equivalence classes are called two-sided cells of $W$. For $\heartsuit \in \set{{\rm left}, {\rm right}}$, we set 
$$\mfr{C}^\heartsuit:=\set{\mca{C} \subseteq W: \ \mca{C} \text{ is a $\heartsuit$-cell in } W}.$$
and also
$$\mfr{C}^{\rm LR}:=\set{\mca{C} \subseteq W: \ \mca{C} \text{ is a  two-sided cell in } W}.$$
Every two-sided cell $\mca{C} \in \mfr{C}^{\rm LR}$ is a disjoint union of left cells, also of right cells. 

Associated with every $\heartsuit$-cell $\mca{C} \subseteq W$ is a Weyl group representation $\sigma_\mca{C}$, which may not be irreducible in general. Naturally, for every $\mca{C} \in \mfr{C}^{\rm LR}$ one has a Weyl group representation $\sigma_\mca{C}$. Moreover, we get
$$\C[W] =\bigoplus_{\mca{C} \in \mfr{C}^\heartsuit} \sigma_\mca{C}$$
and also
$$\C[W] = \bigoplus_{\mca{C} \in \mfr{C}^{\rm LR}} \sigma_\mca{C}.$$
Here if $\mca{C}, \mca{C}' \in \mfr{C}^{\rm LR}$ are distinct, then ${\rm JH}(\sigma_\mca{C}) \cap {\rm JH}(\sigma_{\mca{C}'}) =\emptyset$, i.e., $\sigma_\mca{C}$ and $\sigma_{\mca{C}'}$ have no isomorphic irreducible constituents in common, see \cite{Lus82}. 

\begin{dfn} \label{D:LR}
Two elements $\sigma,  \sigma' \in \Irr(W)$ are called in the same family or in the same two-sided cell, which we denote by $\sigma \sim_{\rm LR} \sigma'$, if $\sigma, \sigma' \in {\rm JH}(\sigma_\mca{C})$ for a $\mca{C} \in \mfr{C}^{\rm LR}$.
\end{dfn}

Let 
$$\Irr(W)^{\rm spe} \subset \Irr(W)$$
be the subset of special representations given by Lusztig \cite{Lus79}. For every two-sided cell $\mca{C} \in \mfr{C}^{\rm LR}$, the representation $\sigma_\mca{C}$ contains a unique special representation $\rho_\mca{C}^{\rm spe} \in \Irr(W)^{\rm spe}$, thus necessarily with multiplicity $\dim \rho_\mca{C}^{\rm spe}$. This gives a bijection
\begin{equation} \label{E:cel-spe}
\mfr{C}^{\rm LR} \longrightarrow \Irr(W)^{\rm spe}, \quad \mca{C} \mapsto \rho_\mca{C}^{\rm spe}.
\end{equation}
See \cite[Theorem 5.25]{Lus84-B}, \cite{BV4, BV5} or \cite{Gec12} for detailed discussions on this.

\subsection{The representation $\sigma_S$} \label{SS:sigS}
For every subset $S \subseteq \Delta$, consider the set
\begin{equation} \label{D:C_S}
\mca{C}_S:=\set{w\in W: {\rm Desc}_{\rm L}(w) = S} \subseteq W.
\end{equation}
Since the function ${\rm Desc}_{\rm L}(\cdot)$ is constant on right cells (see \cite[Proposition 2.4]{KL1}), it follows that 
$$\mca{C}_S = \bigsqcup_{i\in I} \mca{C}_i,$$
where $\mca{C}_i$ is a  right cell in $W$; that is, $\mca{C}_S$ is a disjoint union of right cells of $W$. Every $\mca{C}_i$ gives rise to a right cell representation  $\sigma_{\mca{C}_i}$ of $W$. From this, we define
$$\sigma_S:=\bigoplus_{i \in I} \sigma_{\mca{C}_i},$$
and call it the right cell representation of the Weyl group associated with $S$.

The main question we consider in this section is the following:
\begin{enumerate}
\item[$\bullet$] (Q1) How many two-sided cells of $W$ does $\mca{C}_S$ intersect?
\end{enumerate}
Denote
$$\mfr{C}^{\rm LR}(S):=\set{\mca{C} \in \mfr{C}^{\rm LR}: \ \mca{C} \cap \mca{C}_S \ne \emptyset} \subseteq \mfr{C}^{\rm LR}.$$
Then, (Q1) is equivalent to computing the size of $\mfr{C}^{\rm LR}(S)$. This question is of interest on its own, but is also motivated from the problem of determining whether a certain Whittaker polynomial in $\Q[X]$ splits over $\Q$ or not, especially for groups of classical types, see \S \ref{S:splWhP} . In fact, for our purpose, we are interested in the weaker question:
\begin{enumerate}
\item[$\bullet$] (Q1w) For which subset $S \subseteq \Delta$ one has $\val{\mfr{C}^{\rm LR}(S)} \lest 2$?
\end{enumerate}

Through out the paper, we use $\msc{P}(\Delta)$ to denote the power set of $\Delta$. For every $S  \in \msc{P}(\Delta)$ we write $W(S) \subseteq W$ for the parabolic Weyl subgroup  generated by $\set{s_\alpha: \alpha\in S}$. Denote by
$$w_S \in W(S) \subseteq W$$
the unique longest Weyl element in $W(S)$. In particular, $w_\Delta \in W$ is the longest Weyl element in $W$.
We also write
$$S^*:=\Delta -S$$
for every $S \in \msc{P}(\Delta)$. Given any subset $S \subseteq \Delta$, we have two special elements $w_S, w_{S^*} \cdot w_\Delta \in  \mca{C}_S$. In particular, 
$$\mca{C}_\emptyset = \set{1}, \mca{C}_\Delta = \set{w_\Delta}$$
and $\sigma_\emptyset = \mbm{1}_W, \sigma_\Delta = \varepsilon_W$. Also $\mfr{C}^{\rm LR}(\emptyset)$ and $\mfr{C}^{\rm LR}(\Delta)$ are both singleton sets, containing $\set{1}$ and $\set{w_\Delta}$ respectively.

\begin{prop} \label{P:dual}
Let $S \in \msc{P}(\Delta)$ be arbitrary. For $x\in W$ one has that $x\in \mca{C}_S$ if and only if
\begin{equation} \label{E:bds}
 w_{S^*} \cdot w_\Delta \lest_{\rm R} x \lest_{\rm R}  w_{S}.
\end{equation}
Thus, $\mca{C}_{S^*} =\mca{C}_S \cdot w_\Delta$. Moreover, we have $\sigma_{S^*} \simeq \varepsilon_W \otimes \sigma_S$ and $\val{ \mfr{C}^{\rm LR}(S^*) } = \val{ \mfr{C}^{\rm LR}(S) }$.
\end{prop}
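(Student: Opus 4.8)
The plan is to reduce the entire proposition to one order-theoretic statement about the right preorder and the longest element of a parabolic subgroup, namely
\begin{equation}
x \lest_{\rm R} w_T \iff T \subseteq {\rm Desc}_{\rm L}(x), \qquad T \in \msc{P}(\Delta), \tag{$\star$}
\end{equation}
and then to feed in right translation by $w_\Delta$. I would rely on three standard inputs of Kazhdan--Lusztig theory, to be cited from \cite{KL1} and \cite{Lus84-B}: the monotonicity $x \lest_{\rm R} y \Rightarrow {\rm Desc}_{\rm L}(y) \subseteq {\rm Desc}_{\rm L}(x)$, which refines the right-cell invariance of ${\rm Desc}_{\rm L}$ already quoted after \eqref{D:C_S}; the elementary relation that $ys > y$ forces $ys \lest_{\rm R} y$, read off from the expansion of $C_y C_{s_\alpha}$; and the fact that right translation by $w_\Delta$ reverses $\lest_{\rm R}$ (so $a \lest_{\rm R} b \iff b w_\Delta \lest_{\rm R} a w_\Delta$), carries right cells to right cells and two-sided cells to two-sided cells, and tensors the attached cell module by $\varepsilon_W$. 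I will also use the elementary identity ${\rm Desc}_{\rm L}(x w_\Delta) = \Delta \setminus {\rm Desc}_{\rm L}(x)$.

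To prove ($\star$), the forward implication is immediate from monotonicity together with ${\rm Desc}_{\rm L}(w_T) = T$, which holds because $w_T$ is the longest element of $W(T)$. For the converse, note that $T \subseteq {\rm Desc}_{\rm L}(x)$ is exactly the condition that, in the parabolic factorization $x = u\,v$ with $u \in W(T)$, $v$ a minimal-length representative for $W(T)\backslash W$ and $l(x) = l(u)+l(v)$, one has $u = w_T$. Writing $v = s_{\beta_1}\cdots s_{\beta_k}$ reduced, each partial product $p_j = w_T s_{\beta_1}\cdots s_{\beta_j}$ has length $l(w_T)+j$, so $p_j = p_{j-1} s_{\beta_j} > p_{j-1}$ and the elementary relation gives $p_j \lest_{\rm R} p_{j-1}$; chaining down yields $x = p_k \lest_{\rm R} \cdots \lest_{\rm R} p_0 = w_T$.

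With ($\star$) in hand the two bounds of \eqref{E:bds} follow. Taking $T = S$ gives the upper bound $x \lest_{\rm R} w_S \iff S \subseteq {\rm Desc}_{\rm L}(x)$. For the lower bound I would combine the order-reversal under $\cdot\, w_\Delta$ with the complementation of descent sets: $w_{S^*} w_\Delta \lest_{\rm R} x \iff x w_\Delta \lest_{\rm R} w_{S^*}$, and by ($\star$) applied to $T = S^*$ the latter is equivalent to $S^* \subseteq {\rm Desc}_{\rm L}(x w_\Delta) = \Delta \setminus {\rm Desc}_{\rm L}(x)$, i.e. to ${\rm Desc}_{\rm L}(x) \subseteq S$. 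Intersecting the two conditions shows that \eqref{E:bds} holds exactly when $S \subseteq {\rm Desc}_{\rm L}(x) \subseteq S$, that is, when $x \in \mca{C}_S$. The remaining claims are then formal. The identity ${\rm Desc}_{\rm L}(x w_\Delta) = \Delta \setminus {\rm Desc}_{\rm L}(x)$ gives $x \in \mca{C}_S \iff {\rm Desc}_{\rm L}(x w_\Delta) = S^* \iff x w_\Delta \in \mca{C}_{S^*}$, so $\mca{C}_{S^*} = \mca{C}_S \cdot w_\Delta$. Since $\cdot\, w_\Delta$ sends each right cell $\mca{C}_i \subseteq \mca{C}_S$ to a right cell $\mca{C}_i w_\Delta \subseteq \mca{C}_{S^*}$ with $\sigma_{\mca{C}_i w_\Delta} \simeq \varepsilon_W \otimes \sigma_{\mca{C}_i}$, summing over $i$ gives $\sigma_{S^*} \simeq \varepsilon_W \otimes \sigma_S$; and because $\cdot\, w_\Delta$ is a bijection of $W$ carrying two-sided cells to two-sided cells and $\mca{C}_S$ onto $\mca{C}_{S^*}$, the map $\mca{C} \mapsto \mca{C} w_\Delta$ restricts to a bijection $\mfr{C}^{\rm LR}(S) \to \mfr{C}^{\rm LR}(S^*)$, whence $\val{\mfr{C}^{\rm LR}(S^*)} = \val{\mfr{C}^{\rm LR}(S)}$.

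The only step carrying real content is the converse in ($\star$): once the length-additive factorization $x = w_T v$ is set up, the chain of elementary relations finishes it, so the work is in arranging this factorization and in invoking the three Kazhdan--Lusztig inputs consistently with the conventions of this paper. The place I expect to be most error-prone — and hence to need the most care — is the bookkeeping around $w_\Delta$: fixing the direction of monotonicity, the order-reversal $a \lest_{\rm R} b \iff b w_\Delta \lest_{\rm R} a w_\Delta$, and the matching of $w_{S^*}w_\Delta$ with $w_{(S^*)^*} = w_S$ under the descent complementation.
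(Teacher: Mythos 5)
Your proposal is correct, and it runs on the same three inputs as the paper's proof — monotonicity of ${\rm Desc}_{\rm L}$ along $\lest_{\rm R}$, length-additive factorizations producing $\lest_{\rm R}$-relations, and reversal of $\lest_{\rm R}$ under right translation by $w_\Delta$ — but it packages them around a different key lemma, so the routes are worth comparing. The paper treats \eqref{E:bds} asymmetrically: the ``if'' direction is pure monotonicity, while for ``only if'' it gets the upper bound from the reduced factorization $x = w_S v$ and the lower bound \emph{directly} from the three-fold reduced factorization $w_\Delta = w_{S^*}\cdot x \cdot u$ (using $l(w_{S^*}x)=l(w_{S^*})+l(x)$, cited from Lusztig), reserving $w_\Delta$-reversal for the later claims about $\mca{C}_{S^*}$ and the cells. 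You instead isolate the biconditional $(\star)$: $x \lest_{\rm R} w_T \iff T \subseteq {\rm Desc}_{\rm L}(x)$, and obtain the lower bound by flipping with $w_\Delta$ and applying $(\star)$ to $xw_\Delta$ with $T=S^*$, via ${\rm Desc}_{\rm L}(xw_\Delta)=\Delta\setminus{\rm Desc}_{\rm L}(x)$. This buys a symmetric derivation of both bounds and both directions of the equivalence at once, makes explicit the chain argument ($p_j = p_{j-1}s_{\beta_j} > p_{j-1}$ gives $p_j \lest_{\rm R} p_{j-1}$) that the paper compresses into ``by the definition of $\lest_{\rm R}$,'' and lets you get $\mca{C}_{S^*}=\mca{C}_S\cdot w_\Delta$ purely combinatorially from descent complementation, where the paper invokes the cell-theoretic order reversal of \cite{KL1}; the paper's route, in exchange, keeps the proof of \eqref{E:bds} itself free of any appeal to $w_\Delta$-reversal. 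The remaining claims (sign twist $\sigma_{S^*}\simeq \varepsilon_W\otimes\sigma_S$ and the bijection $\mfr{C}^{\rm LR}(S)\to\mfr{C}^{\rm LR}(S^*)$) are handled essentially identically in both arguments, and your bookkeeping around $w_\Delta$ — the direction of monotonicity, the reversal $a \lest_{\rm R} b \iff bw_\Delta \lest_{\rm R} aw_\Delta$, and the identification $w_{(S^*)^*}=w_S$ — is all consistent.
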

\begin{proof} 
Since $x \lest_{\rm R} y$ implies ${\rm Desc}_{\rm L}(x) \supseteq {\rm Desc}_{\rm L}(y)$, the if part of the first assertion is clear. Now, for every $x\in \mca{C}_S$ we have ${\rm Desc}_{\rm L}(x) = S$ and also
$$l(w_{S^*} \cdot x) = l(w_{S^*}) + l(x),$$
see \cite[Lemma 9.7]{Lus03-B}. We have $w_\Delta = w_{S^*}\cdot  x \cdot u$ for $u \in W$ satisfying 
$$l(w_\Delta) = l(w_{S^*}) + l(x) + l(u).$$
So $w_{S^*}\cdot w_\Delta = x \cdot u$ and by the definition of $\lest_{\rm R}$, we have $w_{S^*}\cdot w_\Delta \lest_{\rm R} x$.
On the other hand, let $v\in W$ be such that $x= w_S \cdot v$ and $l(x) = l(w_S) + l(v)$. Similarly, $x \lest_{\rm R} w_S$. Thus, the first chain of preorders is proved. The map $x \mapsto x \cdot w_\Delta$ on $W$  reverses the preorder $\lest_{\rm R}$ on $W$, and thus induces an involution on the left, right or two-sided cells on $W$, see \cite[Remark 3.3. a)]{KL1}. It follows that $\mca{C}_{S^*} = \mca{C}_S \cdot w_\Delta$.

This equality coupled with properties of right cell representations of $W$ give the isomorphism $\sigma_{S^*} \simeq \varepsilon_W \otimes \sigma_S$, see \cite[Proposition 6.3.5]{BB05-B}. Lastly, as mentioned above, the map $x \mapsto x \cdot w_\Delta$ induces a bijection on $\mfr{C}^{\rm LR}$. It clearly gives rise to a bijection between $\mfr{C}^{\rm LR}(S^*)$ and $\mfr{C}^{\rm LR}(S)$, whence the last assertion.
\end{proof}

To tackle (Q1) or (Q1w), one can consider the explicit decomposition of $\sigma_S$ into irreducible representations of $W$:
$$\sigma_S = \bigoplus_{\rho \in \Irr(W)} \mfr{m}(\rho, \sigma_S) \cdot \rho,$$
where $\mfr{m}(\rho, \sigma_S) \in \N$ denotes the multiplicity. For every $\rho \in \Irr(W)$ occurring in $\sigma_S$, there is a unique $\rho^\sharp \in \Irr(W)^{\rm spe}$ such that $\rho \sim_{\rm LR} \rho^\sharp$.  Here $\rho^\sharp$ is equal to $\rho_\mca{C}^{\rm spe}$ in the notation of \eqref{E:cel-spe} for a unique $\mca{C} \in \mfr{C}^{\rm LR}(S)$.
Define
\begin{equation} \label{E:IrrS}
\Irr(W)^{\rm spe}_S:=\set{\rho^\sharp: \  \rho \in \Irr(W) \text{ and } \mfr{m}(\rho, \sigma_S) \ne 0}.
\end{equation}
By restriction, the map $\mca{C} \mapsto \rho_\mca{C}^{\rm spe}$ in \eqref{E:cel-spe} gives a well-defined natural bijection
\begin{equation} \label{E:bij-s}
\mfr{C}^{\rm LR}(S) \longrightarrow \Irr(W)^{\rm spe}_S.
\end{equation}
Hence, granted with a knowledge of the multiplicity set $\set{\mfr{m}(\rho, \sigma_S)}_{\rho\in \Irr(W)}$, one can determine $\Irr(W)_{\rm spe}^S$ and thus also the set $\mfr{C}^{\rm LR}(S)$.

\subsection{More explicit formulas for $\sigma_S$}
One has a more explicit formula of $\sigma_S$ given as follows (see \cite[Theorem 6.3.8]{BB05-B})
\begin{equation} \label{E:exf1}
\sigma_S \simeq \bigoplus_{Z:\ S \subseteq Z \subseteq \Delta} (-1)^{\val{Z-S}} \cdot \Ind_{W(Z)}^W \varepsilon_{W(Z)}.
\end{equation}
Coupled with the isomorphism
$$\sigma_{S^*} \simeq \varepsilon_W \otimes \sigma_S$$
from Proposition \ref{P:dual}, we get that
\begin{equation} \label{E:exf2}
\sigma_S \simeq \varepsilon_W \otimes \sigma_{S^*} \simeq \bigoplus_{Z: \ Z \subseteq S} (-1)^{\val{S-Z}} \cdot \Ind_{W(Z^*)}^W \mbm{1}_{W(Z^*)}.
\end{equation}

The decomposition of $\Ind_{W(S)}^W \mbm{1}_{W(S)}$ into the irreducible representations of $W$ follows from (but is weaker than) a computation of the Green's polynomial involving the Lusztig--Shoji algorithm (\cite{Spr76, Lus-CSV, Sho88, Kim18}). Nevertheless, it is still a nontrivial problem. However, if $S \in \msc{P}(\Delta)$ is ``close" to $\emptyset$ or $\Delta$, then a direct computation of the decomposition of $\sigma_S$ using \eqref{E:exf1} or \eqref{E:exf2}  is amenable; this will in turn determine $\mfr{C}^{\rm LR}(S)$ efficiently.

\subsection{$\bf{a}$-function and upper bound of $\val{\mfr{C}^{\rm LR}(S)}$}
Another method we use to compute $\mfr{C}^{\rm LR}(S)$ is to use the $\bfa$-function
$$\bfa: W \longrightarrow \N_{\gest 0}$$
introduced by Lusztig \cite[\S 2]{Lus-CAW1}. It is known that the $\bfa$-function is constant on two-sided cells in $W$, see \cite[Theorem 5.4]{Lus-CAW1}. As examples, $\bfa(1)=0$, and $\bfa(w_\Delta)$ is equal to the number of positive roots.

\begin{lm}\label{L:YN12}
Let $S \in \msc{P}(\Delta)$ be arbitrary.
\begin{enumerate}
\item[(i)] One has $\bfa(w_{S^*}\cdot w_\Delta) \gest \bfa(x) \gest \bfa(w_{S})$
for every $x \in \mca{C}_S$.
\item[(ii)] If $\val{\bfa(w_{S^*} \cdot w_\Delta) - \bfa(w_{S})} \lest 1$, then 
$x \sim_{\rm R} w_{S^*} \cdot w_\Delta$ or $x \sim_{\rm R} w_{S}$; in particular, we have $\val{\mfr{C}^{\rm LR}(S)} \lest 2$ in this case.
\item[(iii)] If $\bfa(w_{S^*} \cdot w_\Delta) = \bfa(w_{S})$, then $\val{\mfr{C}^{\rm LR}(S)}=1$.
\end{enumerate}
\end{lm}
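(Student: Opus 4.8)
The plan is to derive all three parts directly from Proposition~\ref{P:dual}, combined with two standard properties of Lusztig's $\bfa$-function for finite Weyl groups: that $\bfa$ reverses the right preorder, and that equal $\bfa$-values along a $\lest_{\rm R}$-chain force membership in a common right cell (properties P4 and P10 in Lusztig's numbering; see \cite{Lus-CAW1}).

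For part (i), I would start from the characterization in Proposition~\ref{P:dual}, namely that $x \in \mca{C}_S$ is equivalent to $w_{S^*}\cdot w_\Delta \lest_{\rm R} x \lest_{\rm R} w_S$. Since $y \lest_{\rm R} z$ implies $\bfa(y) \gest \bfa(z)$, applying this to each of the two inequalities yields $\bfa(w_{S^*}\cdot w_\Delta) \gest \bfa(x) \gest \bfa(w_S)$, which is exactly the claim.

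For parts (ii) and (iii) the extra ingredient is property P10: if $y \lest_{\rm R} z$ and $\bfa(y) = \bfa(z)$, then $y \sim_{\rm R} z$. Put $a = \bfa(w_S)$ and $b = \bfa(w_{S^*}\cdot w_\Delta)$, so that part (i) gives $a \lest \bfa(x) \lest b$ for every $x \in \mca{C}_S$, with $b \gest a$. Under the hypothesis $b - a \lest 1$ of part (ii), the only possible values are $\bfa(x) \in \set{a, b}$. If $\bfa(x) = a = \bfa(w_S)$, then since $x \lest_{\rm R} w_S$ property P10 forces $x \sim_{\rm R} w_S$; if $\bfa(x) = b = \bfa(w_{S^*}\cdot w_\Delta)$, then since $w_{S^*}\cdot w_\Delta \lest_{\rm R} x$ property P10 forces $x \sim_{\rm R} w_{S^*}\cdot w_\Delta$. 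Hence every element of $\mca{C}_S$ lies in the right cell of $w_S$ or in that of $w_{S^*}\cdot w_\Delta$; since each right cell sits inside a single two-sided cell, $\mca{C}_S$ meets at most two two-sided cells, giving $\val{\mfr{C}^{\rm LR}(S)} \lest 2$. Part (iii) is the degenerate case $b = a$, where the same argument collapses the two alternatives into one: every $x$ satisfies $\bfa(x) = a$ and hence $x \sim_{\rm R} w_S$, so $\mca{C}_S$ is a single right cell and $\val{\mfr{C}^{\rm LR}(S)} = 1$.

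I do not anticipate any serious obstacle here: the entire argument is bookkeeping of $\bfa$-values along the right-preorder chain furnished by Proposition~\ref{P:dual}. The only point requiring care is the correct direction in which $\bfa$ reverses $\lest_{\rm R}$, together with the invocation of property P10; the latter is the genuinely nontrivial input, but for finite Weyl groups it is a theorem of Lusztig and may be quoted freely.
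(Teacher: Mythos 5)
Your proposal is correct and follows essentially the same route as the paper's own proof: part (i) from the chain $w_{S^*}\cdot w_\Delta \lest_{\rm R} x \lest_{\rm R} w_S$ of Proposition \ref{P:dual} together with the order-reversal of $\bfa$ (the paper cites \cite{Lus-CAW1}*{Theorem 5.4}, which is your P4), and parts (ii)--(iii) by pinning $\bfa(x)$ to one of the two endpoint values and invoking the fact that equal $\bfa$-values along a $\lest_{\rm R}$-chain force right-cell equivalence (the paper cites \cite{Lus-CAW2}*{Corollary 1.9}, which is your P10). The only difference is the labeling of the cited inputs, not the argument itself.
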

\begin{proof}
First, (i) follows from \eqref{E:bds} and \cite[Theorem 5.4]{Lus-CAW1}. Now for (ii), the assumption together with (i) shows that $\bfa(x) = \bfa(w_{S^*} \cdot w_\Delta)$ or $\bfa(x)=\bfa(w_{S})$ for every $x\in \mca{C}_S$. We also have $w_{S^*} \cdot w_\Delta \lest_{\rm R} x \lest_{\rm R} w_{S}$ from Proposition \ref{P:dual} . It then follows from \cite[Corollary 1.9]{Lus-CAW2} that $x \sim_{\rm R} w_{S^*} \cdot w_\Delta$ or $x \sim_{\rm R} w_{S}$. The rest of (ii) is clear. The proof of (iii) is completely analogous to that of (ii) and we omit the details.
\end{proof}

The set $\mfr{C}^{\rm LR}$ is endowed with a partial order given as follows. For $\mca{C}, \mca{C}' \in \mfr{C}^{\rm LR}$ we write $\mca{C} \lest_{\rm LR} \mca{C}'$ if there is $x \in \mca{C}, y\in \mca{C}'$ such that $x\lest_{\rm LR} y$. Let $\msc{N}^{\rm spe}$ denote the set of special nilpotent orbits of the complex Lie algebra $\mfr{g}_\C$ with underlying root system $\Delta$. In fact, by definition the set $\msc{N}^{\rm spe}$ corresponds to the special irreducible Weyl group representations via the Springer correspondence
$${\rm Spr}: \Irr(W)^{\rm spe} \longrightarrow \msc{N}^{\rm spe}.$$
Also, $\msc{N}^{\rm spe}$ is partially ordered by topological closure, i.e., $\mca{O}_1 \lest \mca{O}_2$ if $\mca{O}_1 \subseteq \overline{\mca{O}_2}$. The composite of ${\rm Spr}$ with the map in \eqref{E:cel-spe} gives a bijection
$$f_{\rm co}: \mfr{C}^{\rm LR} \longrightarrow \Irr(W)^{\rm spe} \longrightarrow \msc{N}^{\rm spe}.$$
The map $f_{\rm co}$ preserves the partial order on $\mfr{C}^{\rm LR}$ and that on $\msc{N}^{\rm spe}$, see \cite[Corollary 5.6]{Gec12} or \cite[Theorem 4, b)]{Bez09}.

\begin{lm} \label{L:co}
For every $\Delta$ and $S \subseteq \Delta$, let $\mca{C}_{\rm max}^S \in \mfr{C}^{\rm LR}(S)$ (resp.  $\mca{C}_{\rm min}^S \in \mfr{C}^{\rm LR}(S)$) be the two-sided cell containing $w_S$ (resp. $w_{S^*} w_\Delta$).
\begin{enumerate}
\item[(i)] We have $\mca{C}_{\rm min}^S \lest_{\rm LR} \mca{C}_{\rm max}^S$ and also
$$\mca{C}_{\rm min}^S \lest_{\rm LR} \mca{C} \lest_{\rm LR} \mca{C}_{\rm max}^S$$
for every $\mca{C} \in \mfr{C}^{\rm LR}(S)$ (and thus the notation) .
\item[(ii)] By restriction, one has an injection 
$$\begin{tikzcd}
f_{\rm co}: \mfr{C}^{\rm LR}(S) \ar[r, hook]  & \msc{N}^{\rm spe} \cap [f_{\rm co}(\mca{C}_{\rm min}^S), f_{\rm co}(\mca{C}_{\rm max}^S)],
\end{tikzcd}$$
where the codomain denotes the set of special nilpotent orbits lying between $f_{\rm co}(\mca{C}_{\rm min}^S)$ and $f_{\rm co}(\mca{C}_{\rm max}^S)$.
\end{enumerate}
\end{lm}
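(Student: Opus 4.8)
The plan is to deduce both parts directly from the element-wise bounds of Proposition \ref{P:dual}, the compatibility of the right preorder with the two-sided preorder, and the order-preserving bijection $f_{\rm co}$ recorded just above the statement. No new geometric input is needed: the lemma is essentially a formal consequence of the preorder estimate $w_{S^*}\cdot w_\Delta \lest_{\rm R} x \lest_{\rm R} w_S$ already established.

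For part (i), I would first recall that $w_S$ and $w_{S^*}\cdot w_\Delta$ both lie in $\mca{C}_S$, so that $\mca{C}_{\rm max}^S$ and $\mca{C}_{\rm min}^S$ are genuinely elements of $\mfr{C}^{\rm LR}(S)$. Fix an arbitrary $\mca{C} \in \mfr{C}^{\rm LR}(S)$ and choose $x \in \mca{C} \cap \mca{C}_S$. By Proposition \ref{P:dual} we have $w_{S^*}\cdot w_\Delta \lest_{\rm R} x \lest_{\rm R} w_S$. Since the preorder $\lest_{\rm LR}$ is generated by $\lest_{\rm L}$ and $\lest_{\rm R}$, each instance of $\lest_{\rm R}$ entails the corresponding $\lest_{\rm LR}$, so $w_{S^*}\cdot w_\Delta \lest_{\rm LR} x \lest_{\rm LR} w_S$. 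As $w_{S^*}\cdot w_\Delta \in \mca{C}_{\rm min}^S$, $x \in \mca{C}$ and $w_S \in \mca{C}_{\rm max}^S$, the definition of the induced order on two-sided cells yields $\mca{C}_{\rm min}^S \lest_{\rm LR} \mca{C} \lest_{\rm LR} \mca{C}_{\rm max}^S$. Specializing the chain to any single $x\in \mca{C}_S$ (e.g.\ $x = w_{S^*}\cdot w_\Delta$) gives in particular $\mca{C}_{\rm min}^S \lest_{\rm LR} \mca{C}_{\rm max}^S$, which also justifies the ``min/max'' terminology.

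For part (ii), I would invoke that $f_{\rm co}$ is a bijection preserving the partial orders on $\mfr{C}^{\rm LR}$ and $\msc{N}^{\rm spe}$. Applying $f_{\rm co}$ to the chain from (i) gives $f_{\rm co}(\mca{C}_{\rm min}^S) \lest f_{\rm co}(\mca{C}) \lest f_{\rm co}(\mca{C}_{\rm max}^S)$ for every $\mca{C} \in \mfr{C}^{\rm LR}(S)$, so that $f_{\rm co}(\mca{C})$ lands in the closed interval $\msc{N}^{\rm spe} \cap [f_{\rm co}(\mca{C}_{\rm min}^S), f_{\rm co}(\mca{C}_{\rm max}^S)]$. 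Injectivity of the restriction is automatic since $f_{\rm co}$ is injective on all of $\mfr{C}^{\rm LR}$, and this produces the displayed embedding.

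The only genuinely imported facts — and hence the only place where something could go wrong — are structural: that the element-wise two-sided preorder descends to a bona fide partial order on $\mfr{C}^{\rm LR}$ (well-definedness and antisymmetry after passing to cells, from Lusztig's theory) and that $f_{\rm co}$ is order-preserving, which is cited. The main point requiring care is thus purely bookkeeping about directions: one must confirm that the $\lest_{\rm R}$-bottom of the chain, $w_{S^*}\cdot w_\Delta$ — which by Lemma \ref{L:YN12}(i) carries the \emph{larger} $\bfa$-value — indeed sits in the $\lest_{\rm LR}$-smaller cell $\mca{C}_{\rm min}^S$, consistent with the general fact that the $\bfa$-function is antitone for $\lest_{\rm LR}$. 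Once the order conventions are pinned down, the proof is immediate.
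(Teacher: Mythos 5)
Your proposal is correct and follows essentially the same route as the paper: both rest on the chain $w_{S^*}\cdot w_\Delta \lest_{\rm R} x \lest_{\rm R} w_S$ for $x\in\mca{C}_S$ from Proposition \ref{P:dual}, pass from $\lest_{\rm R}$ to the induced $\lest_{\rm LR}$ order on two-sided cells, and obtain (ii) immediately from the order-preserving injectivity of $f_{\rm co}$. The only cosmetic difference is that the paper re-derives $w_{S^*}w_\Delta \lest_{\rm R} w_S$ directly from the factorization $w_{S^*}w_\Delta = w_S\cdot(w_S w_{S^*} w_\Delta)$, whereas you obtain it by specializing the chain, which is equally valid.
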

\begin{proof}
For (i), we have 
$${\rm Desc}_{\rm L}(w_S) = S = {\rm Desc}_{\rm L}(w_{S^*} w_\Delta)$$
 and $w_{S^*} w_\Delta = w_S \cdot (w_S w_{S^*} w_\Delta)$. 
 This implies that $w_{S^*} w_\Delta \lest_{\rm R}w_S $. For every $x\in \mca{C}_S$ one has $w_{S^*} w_\Delta \lest_{\rm R} x \lest_{\rm R} w_S$, and thus $\mca{C}_{\rm min}^S \lest_{\rm LR} \mca{C} \lest_{\rm LR} \mca{C}_{\rm max}^S$ for every $\mca{C} \in \mfr{C}^{\rm LR}(S)$. Also, (ii) follows immediately from (i).
\end{proof}

We conveniently write 
$$\mca{O}_{\rm min}^S:=f_{\rm co}(\mca{C}_{\rm min}^S), \quad \mca{O}_{\rm max}^S:=f_{\rm co}(\mca{C}_{\rm max}^S)$$ and also set
$$\msc{N}^{\rm spe}(S):=\msc{N}^{\rm spe} \cap [\mca{O}_{\rm min}^S, \mca{O}_{\rm max}^S].$$

For each Cartan type, we write $\Delta:=\set{\alpha_1, \alpha_2, ..., \alpha_r}$ using the Bourbaki labelling as in \cite[Page 265--290]{BouL2}. For every $0\lest j \lest r$ we define
$$S_j:= \set{\alpha_1, \alpha_2, ..., \alpha_j} \subseteq \Delta,$$
where for convention we take 
$$S_0:=\emptyset.$$
Since the Weyl groups of type $B_r$ and $C_r$ are the same, we only  discuss the case when $\Delta$ is of type $B_r$ here. The reason why we consider only $S_j$ as above (instead of general $S \subseteq \Delta$) is motivated from our consideration in \S \ref{S:WhP}--\S \ref{S:splWhP} regarding the splitting properties of certain Whittaker polynomials. In fact, we only consider $\Delta$ of classical type and their associated $S_j$ as above. We mention in passing also that for type $A_r$ groups, one also need to consider subset of $\Delta$ of the form $S_j - \set{\alpha_1}$, for the same purpose of Whittaker polynomials; this we discuss in \S \ref{SS:Amore}.

Thus, for $\Delta$ of classical type $\diamondsuit \in \set{A_r, B_r, D_r}$, we define a function 
$$\varphi_\diamondsuit:  [0, r]_\N  \longrightarrow \N$$
given by
$$\varphi_\diamondsuit(j):=\val{\mfr{C}^{\rm LR}(S_j)}.$$
One has $\varphi_\diamondsuit(0) = \varphi_\diamondsuit(r)=1$. For convenience of later reference, we reproduce from \cite{BouL2} the Dynkin diagram for $\Delta$ of type $A_r, B_r, D_r$ and $G_2$ below:

\begin{table}[H] 
\caption{Dynkin diagrams for $A_r, B_r, D_r$ and $G_2$}
 \label{T:ABDG}
$$\qquad 
\begin{picture}(4.7,0.2)(0,0)
\put(1,0){\circle{0.08}}
\put(1.5,0){\circle{0.08}}
\put(2,0){\circle{0.08}}
\put(2.5,0){\circle{0.08}}
\put(3,0){\circle{0.08}}
\put(1.04,0){\line(1,0){0.42}}
\multiput(1.55,0)(0.05,0){9}{\circle{0.02}}
\put(2.04,0){\line(1,0){0.42}}
\put(2.54,0){\line(1,0){0.42}}
\put(1,0.1){\footnotesize $\alpha_{1}$}
\put(1.5,0.1){\footnotesize $\alpha_{2}$}
\put(2,0.1){\footnotesize $\alpha_{r-2}$}
\put(2.5,0.1){\footnotesize $\alpha_{r-1}$}
\put(3,0.1){\footnotesize $\alpha_{r}$}
\end{picture}
$$
\vskip 10pt

$$ \qquad 
\begin{picture}(4.7,0.2)(0,0)
\put(1,0){\circle{0.08}}
\put(1.5,0){\circle{0.08}}
\put(2,0){\circle{0.08}}
\put(2.5,0){\circle{0.08}}
\put(3,0){\circle{0.08}}
\put(1.04,0){\line(1,0){0.42}}
\multiput(1.55,0)(0.05,0){9}{\circle*{0.02}}
\put(2.04,0){\line(1,0){0.42}}
\put(2.54,0.015){\line(1,0){0.42}}
\put(2.54,-0.015){\line(1,0){0.42}}
\put(2.68,-0.05){{\large $>$}}
\put(1,0.1){\footnotesize $\alpha_1$}
\put(1.5,0.1){\footnotesize $\alpha_2$}
\put(2,0.1){\footnotesize $\alpha_{r-2}$}
\put(2.5,0.1){\footnotesize $\alpha_{r-1}$}
\put(3,0.1){\footnotesize $\alpha_r$}
\end{picture}
$$
\vskip 10pt

$$
\begin{picture}(4.7,0.4)(0,0)
\put(1,0){\circle{0.08}}
\put(1.5,0){\circle{0.08}}
\put(2,0){\circle{0.08}}
\put(2.5,0){\circle{0.08}}
\put(3,0){\circle{0.08}}
\put(3.5, 0.25){\circle{0.08}}
\put(3.5, -0.25){\circle{0.08}}
\put(1.04,0){\line(1,0){0.42}}
\put(1.54,0){\line(1,0){0.42}}
\multiput(2.05,0)(0.05,0){9}{\circle{0.02}}
\put(2.54,0){\line(1,0){0.42}}
\put(3.03,0.03){\line(2,1){0.43}}
\put(3.03,-0.03){\line(2,-1){0.43}}
\put(1,0.1){\footnotesize $\alpha_1$}
\put(1.5,0.1){\footnotesize $\alpha_2$}
\put(2,0.1){\footnotesize $\alpha_3$}
\put(2.5,0.1){\footnotesize $\alpha_{r-3}$}
\put(2.9,0.15){\footnotesize $\alpha_{r-2}$}
\put(3.5,0.35){\footnotesize $\alpha_{r-1}$}
\put(3.5,-0.4){\footnotesize $\alpha_r$}
\end{picture}
$$
\vskip 10pt

$$
\begin{picture}(5.7,0.2)(0,0)
\put(2.5,0){\circle{0.08}}
\put(3,0){\circle{0.08}}
\put(2.53,0.018){\line(1,0){0.44}}
\put(2.54,0){\line(1,0){0.42}}
\put(2.53,-0.018){\line(1,0){0.44}}
\put(2.68,-0.05){{\large $<$}}
\put(2.5,0.1){\footnotesize $\alpha_1$}
\put(3,0.1){\footnotesize $\alpha_2$}
\end{picture}
$$
\end{table}

We will compute for $\diamondsuit \in \set{A_r, B_r, D_r}$ and every $j \in [0, r]_\N$ the function $\varphi_\diamondsuit(j)$. For fixed $S_j, 0\lest j \lest r$, we will study $\mfr{C}^{\rm LR}(S_j^*)$ and the strategy we implement for this is the following:
\begin{enumerate}
\item[(S1)] We first compute the values $\bfa(w_{S_j^*})$ and $\bfa(w_{S_j} \cdot w_\Delta)$, which then gives the integral interval $[\bfa(w_{S_j^*}), \bfa(w_{S_j} \cdot w_\Delta)]_\N$ with length 
$$\mfr{m}_j:=\bfa(w_{S_j} \cdot w_\Delta) - \bfa(w_{S_j^*})  +1.$$
It follows from Lemma \ref{L:YN12} (i) that $\bfa(x) \in [\bfa(w_{S_j^*}), \bfa(w_{S_j} \cdot w_\Delta)]_\N$ for every $x\in \mca{C}_{S_j^*}$.
\item[(S2)] For every $v\in [\bfa(w_{S_j^*}), \bfa(w_{S_j} \cdot w_\Delta)]_\N$ we give an element $x\in \mca{C}_{S_j^*}$ such that $\bfa(x)=v$. That is, the function 
$$\bfa: \mca{C}_{S_j^*} \longrightarrow [\bfa(w_{S_j^*}), \bfa(w_{S_j} \cdot w_\Delta)]_\N$$
is surjective. In particular, $\varphi_\diamondsuit(j) \gest \mfr{m}_j$.
\item[(S3)] We compute $\mca{O}_{\rm min}^{S_j^*}$ and $\mca{O}_{\rm max}^{S_j^*}$, and thus determine $\msc{N}^{\rm spe}(S_j^*)$. We show that $\val{ \msc{N}^{\rm spe}(S_j^*) } = \mfr{m}_j$. It then follows from Lemma \ref{L:co} (iii) that $\varphi_\diamondsuit(j) \lest \mfr{m}_j$. One concludes $\varphi_\diamondsuit(j) = \mfr{m}_j$.
\end{enumerate}

Before we state our result, we introduce some notation. First, for every $1\lest i \lest r$, we write
$s_i:= s_{\alpha_i} \in W$ for the simple reflection associated with $\alpha_i$. For $\Delta$ of type $B_r$, for every $1\lest i \lest r$ and $1\lest q \lest r-1$ we write
\begin{equation} \label{D:bb}
\mbm{b}_{i,q}:=s_i s_{i+1} ... s_{r-1} s_r s_{r-1} ... s_{q+1} s_q \in W(B_r).
\end{equation}
Meanwhile, for $\Delta$ of type $D_r$ and every $1\lest i \lest r$ and $1\lest q \lest r-2$, we write
\begin{equation} \label{D:dd}
\mbm{d}_{i,q}:=s_i s_{i+1} ... s_{r-1} s_r s_{r-2} s_{r-3} ... s_{q+1} s_q \in W(D_r),
\end{equation}
and set $\mbm{d}_{r,r-1}:=\mbm{d}_{r,r}:=s_r \in W(D_r)$. Also, let $\tau\in {\rm Aut}(W)$ be the unique element such that 
$$\tau(s_r) = s_{r-1}, \quad \tau(s_l)=s_l$$
for every $1\lest l\lest r-2$. 

The main result in this section is as follows:

\begin{thm} \label{T:ABD}
Let $\Delta$ be of type $\diamondsuit \in \set{A_r, B_r, D_r}$, and let $S_j \in \msc{P}(\Delta), 1 \lest j \lest r-1$. Then the assertions in (S1)--(S3) hold with the following precise data tabulated in Tables \ref{T:A}, \ref{T:B}, \ref{T:D-odd} and \ref{T:D-even} for type $A_r, B_r$, $D_r$ with $r$ odd, and $D_r$ with $r$ even, respectively:
\begin{enumerate}
\item[--] the value of $\varphi_\diamondsuit(j)=\mfr{m}_j$;
\item[--]  the set $\bfa(\mca{C}_{S_j^*})$, or equivalently, the interval $[\bfa(w_{S_j^*}), \bfa(w_{S_j} \cdot w_\Delta)]_\N$ of length $\mfr{m}_j$;
\item[--]  a set $X_j:=\set{x_k: 1\lest k \lest \mfr{m}_j} \subset \mca{C}_{S_j^*}$ such that $\bfa(X_j)=\bfa(\mca{C}_{S_j^*})\in[\bfa(w_{S_j^*}), \bfa(w_{S_j} \cdot w_\Delta)]_\N$;
\item[--]  the set $\msc{N}^{\rm spe}(S_j^*)$.
\end{enumerate}
In particular, we see that the $\bfa$-function separates two-sided cells on $\mfr{C}^{\rm LR}(S_j^*)$ for every $1\lest j \lest r-1$; also, $\varphi_\diamondsuit(-)$ is a monotone non-decreasing function on $[0, r-1]_\N$.
\end{thm}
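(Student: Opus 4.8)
The plan is to establish the theorem by carrying out the three-step strategy (S1)--(S3) separately for each Cartan type $\diamondsuit \in \set{A_r, B_r, D_r}$, treating $D_r$ with $r$ odd and $r$ even as two subcases. Throughout I replace $S_j$ by its complement $S_j^*$; this is harmless because Proposition \ref{P:dual} gives $\val{\mfr{C}^{\rm LR}(S_j^*)} = \val{\mfr{C}^{\rm LR}(S_j)} = \varphi_\diamondsuit(j)$, and the explicit hook elements $\mbm{b}_{i,q}$ and $\mbm{d}_{i,q}$ are designed to have left descent set contained in $S_j^* = \set{\alpha_{j+1}, \dots, \alpha_r}$.

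For (S1) I would compute the two endpoints of the target interval. The element $w_{S_j^*}$ is the longest element of the standard parabolic $W(S_j^*)$, so $\bfa(w_{S_j^*}) = l(w_{S_j^*}) = \val{\Phi^+_{S_j^*}}$ is simply the number of positive roots of the subsystem spanned by $S_j^*$. For the other endpoint I use that the involution $x \mapsto x \cdot w_\Delta$ carries the cell of $w_{S_j}$ to that of $w_{S_j}\cdot w_\Delta$ and twists the attached special representation by $\varepsilon_W$ (Proposition \ref{P:dual}); identifying this twisted special representation and pushing it through the Springer correspondence ${\rm Spr}$ pins down the orbit, whence $\bfa(w_{S_j}\cdot w_\Delta)$ follows from the standard combinatorial description of the $\bfa$-function in classical types (partitions for $A_r$, symbols for $B_r$ and $D_r$). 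This produces the interval $[\bfa(w_{S_j^*}), \bfa(w_{S_j}\cdot w_\Delta)]_\N$, its length $\mfr{m}_j$, and, by Lemma \ref{L:YN12}(i), the containment $\bfa(\mca{C}_{S_j^*}) \subseteq [\bfa(w_{S_j^*}), \bfa(w_{S_j}\cdot w_\Delta)]_\N$.

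Step (S2) then yields $\varphi_\diamondsuit(j) \gest \mfr{m}_j$. Since $\bfa$ is constant on two-sided cells, it suffices to exhibit for every integer $v$ in the interval a single $x \in \mca{C}_{S_j^*}$ with $\bfa(x)=v$; the $\mfr{m}_j$ distinct values then force at least $\mfr{m}_j$ distinct cells meeting $\mca{C}_{S_j^*}$. I would build the required family $X_j = \set{x_k}$ as products of the hook elements $\mbm{b}_{i,q}$, $\mbm{d}_{i,q}$, verifying both that ${\rm Desc}_{\rm L}(x_k) = S_j^*$ and that the shape attached to $x_k$ under the generalized Robinson--Schensted correspondence realizes the prescribed $\bfa$-value. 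Step (S3) supplies the reverse inequality: I compute $\mca{O}_{\rm min}^{S_j^*} = f_{\rm co}(\mca{C}_{\rm min}^{S_j^*})$ and $\mca{O}_{\rm max}^{S_j^*} = f_{\rm co}(\mca{C}_{\rm max}^{S_j^*})$ as explicit partitions and count the special orbits of $\msc{N}^{\rm spe}(S_j^*)$ lying between them in the closure order, showing the count equals $\mfr{m}_j$; Lemma \ref{L:co}(ii) then gives $\varphi_\diamondsuit(j) \lest \val{\msc{N}^{\rm spe}(S_j^*)} = \mfr{m}_j$. Combining the two bounds yields $\varphi_\diamondsuit(j) = \mfr{m}_j$. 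Finally, because $\bfa$ now takes exactly $\mfr{m}_j$ values on exactly $\mfr{m}_j$ cells, the induced map from $\mfr{C}^{\rm LR}(S_j^*)$ to $\bfa$-values is a bijection, i.e. $\bfa$ separates cells; and the monotonicity of $\varphi_\diamondsuit$ on $[0, r-1]_\N$ is read off from the closed formula for $\mfr{m}_j = \bfa(w_{S_j}\cdot w_\Delta) - \bfa(w_{S_j^*}) + 1$.

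I expect the main obstacle to be step (S3), and specifically the combinatorics of special nilpotent orbits: one must correctly locate the endpoints $\mca{O}_{\rm min}^{S_j^*}$ and $\mca{O}_{\rm max}^{S_j^*}$ through the Springer correspondence and then enumerate precisely the \emph{special} orbits in the closure interval, where the special-ness constraint for types $B_r, C_r, D_r$ (and the genuinely different behaviour of $D_r$ for $r$ odd versus $r$ even) makes the count delicate and is what must ultimately reproduce $\mfr{m}_j$. A secondary technical burden is (S2): confirming that the explicit words $x_k$ really attain each intermediate $\bfa$-value requires computing their shapes under the generalized Robinson--Schensted algorithm, rather than appealing to any soft argument.
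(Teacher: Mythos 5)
Your plan follows essentially the same route as the paper's proof, which carries out (S1)--(S3) case by case in \S\ref{S:cells}: it computes the endpoint values (e.g.\ $\bfa(w_{S_j^*})=(r-j)^2=l(w_{S_j^*})$ in type $B_r$, as you observe), realizes every intermediate value by exactly the hook families $\big(\prod_{a=-k}^{-1}\mbm{b}_{j+1,j+1+2a}\big)w_{S_j^*}$ (and their $\mbm{d}$-analogues in type $D_r$), with $\bfa$-values and orbits read off from the combinatorial characterizations of \cite{BMW} and \cite{BXX23} --- the generalized Robinson--Schensted input you anticipate --- and then counts the special partitions in the closure interval, which is what Lemma \ref{L:co}(ii) needs (your citation of part (ii) is the correct one; the paper's strategy (S3) misquotes it as (iii)). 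The one real divergence is the upper endpoint: instead of your duality argument, the paper exhibits explicit left-cell equivalences, e.g.\ $w_{S_j}w_\Delta\sim_{\rm L}s_1s_3\cdots s_{j-1}\,w_{S_j^*}$ for $j$ even in type $B_r$ (Proposition \ref{P:B}), and again invokes the same combinatorics. Your duality route is workable for the $\bfa$-value, but note one imprecision: Proposition \ref{P:dual} twists the \emph{cell module} by $\varepsilon_W$; the twist $\varepsilon_W\otimes\rho^{\rm spe}_{\mca{C}}$ of the special representation lies in the family of $\mca{C}w_\Delta$ but need not itself be special, and ${\rm Spr}$ is defined here only on $\Irr(W)^{\rm spe}$, so to pin down $\mca{O}^{S_j^*}_{\rm min}$ you must first pass to the special member of the twisted family (special symbols, i.e.\ Lusztig--Spaltenstein duality) rather than push the twist itself through ${\rm Spr}$.

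One step of your plan does fail, though the defect lies in the statement rather than in your handling of (S1)--(S3): the closing monotonicity claim cannot be ``read off from the closed formula,'' because the tables themselves contradict it for $D_r$ with $r$ odd. Table \ref{T:D-odd} gives $\varphi_{D_r}(r-2)=\floor{\tfrac{r-1}{2}}+1=\tfrac{r+1}{2}$ but $\varphi_{D_r}(r-1)=\tfrac{r-1}{2}$, a strict decrease at the last step; concretely $\varphi_{D_5}=(1,2,2,3,2)$ on $[0,4]_\N$. Monotonicity does hold for $A_r$, $B_r$, and $D_r$ with $r$ even (which is why the paper's $\varphi_{B_6}$ and $\varphi_{D_6}$ pictures look fine), but neither your argument nor the paper's --- whose case-by-case propositions never address monotonicity at all --- establishes the final clause as stated; it should be restricted to exclude type $D_r$ with $r$ odd.
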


\begin{table}[H] 
\caption{Type $A_r$ and $S_j, 1\lest j \lest r-1$}
 \label{T:A}
\vskip 10pt
\renewcommand{\arraystretch}{1.4}
\begin{tabular}{|c|c|c|c|c|c|c|c|c|c|c|}
\hline
$\varphi_{A_r}(j)$ & $\bfa(\mca{C}_{S_j^*})$  &  $X_j$ & $\msc{N}^{\rm spe}(S_j^*)$    \\
\hline
\hline
$1$ & $\frac{(r-j)(r-j+1)}{2}$ & $ w_{S_j^*}$,  & $\mca{O}_{(j+1, 1^{r-j})}$   \\ 
\hline
\end{tabular}
\end{table}
\vskip 10pt

\begin{table}[H] 
\caption{Type $B_r$ and $S_j, 1\lest j \lest r-1$}
 \label{T:B}
\vskip 10pt
\renewcommand{\arraystretch}{1.4}
\begin{tabular}{|c|c|c|c|c|c|c|c|c|c|c|}
\hline
$\varphi_{B_r}(j)$ & $\bfa(\mca{C}_{S_j^*})$  &  $X_j$ & $\msc{N}^{\rm spe}(S_j^*)$    \\
\hline
\hline
$\floor{\frac{j}{2}}+1$ & $(r-j)^2+k$, & $(\prod_{a=-k}^{-1} \mbm{b}_{j+1, j+1+2a}) \cdot w_{S_j^*}$,  & $\mca{O}_{(2j+1-2k, 2k+1, 1^{2r-2j-1})}$,   \\ 
& $0\lest k \lest \floor{\frac{j}{2}}$ & $0\lest k \lest \floor{\frac{j}{2}}$ & $0\lest k \lest \floor{\frac{j}{2}}$\\
\hline
\end{tabular}
\end{table}
\vskip 10pt

\begin{table}[H]  
\caption{Type $D_r, r$ odd  and $S_j, 1\lest j \lest r-1$}
\label{T:D-odd}
\vskip 10pt
\renewcommand{\arraystretch}{1.4}
\begin{tabular}{|c|c|c|c|c|c|c|c|c|c|c|}
\hline
$\varphi_{D_r}(j)$ & $\bfa(\mca{C}_{S_j^*})$  &  $X_j$ & $\msc{N}^{\rm spe}(S_j^*)$    \\
\hline
\hline
$\floor{\frac{j+1}{2}}+1$, & $(r-j)(r-j-1)+k$, & $(\prod_{a=-k}^{-1} \mbm{d}_{j+1, j+1+2a} )\cdot w_{S_j^*}$,  & $\big\{ \mca{O}_{(2j+1-2k, 2k+1, 1^{2r-2j-2})}:$   \\ 
if $0\lest j \lest r-2$ & $0\lest k \lest \floor{\frac{j+1}{2}}$ & $0\lest k \lest \floor{\frac{j+1}{2}}$ & $0\lest k \lest \floor{\frac{j}{2}} \big\}$\\
& & &  $\cup \set{\mca{O}_{((j+1)^2, 1^{2r-2j-2})}}$  \\
\hline
$\frac{r-1}{2}$, & $k$, & $\prod_{a=-k}^{-1} \tau^{a+k}(\mbm{d}_{r, r+2a+1})$, & $\mca{O}_{(2r-2k-1, 2k+1)}$, \\
if $j=r-1$ & $1\lest k \lest \frac{r-1}{2}$ & $1\lest k \lest \frac{r-1}{2}$ & $1\lest k \lest \frac{r-1}{2}$ \\
\hline
\end{tabular}
\end{table}
\vskip 10pt

\begin{table}[H]  
\caption{Type $D_r, r$ even and $S_j, 1\lest j \lest r-1$}
\label{T:D-even}
\vskip 10pt
\renewcommand{\arraystretch}{1.4}
\begin{tabular}{|c|c|c|c|c|c|c|c|c|c|c|}
\hline
$\varphi_{D_r}(j)$ & $\bfa(\mca{C}_{S_j^*})$  &  $X_j$ & $\msc{N}^{\rm spe}(S_j^*)$    \\
\hline
\hline
$\floor{\frac{j+1}{2}}+1$, & $(r-j)(r-j-1)+k$, & $(\prod_{a=-k}^{-1} \mbm{d}_{j+1, j+2+2a}) \cdot w_{S_j^*}$,  & $\big\{\mca{O}_{(2j+1-2k, 2k+1, 1^{2r-2j-2})}:$   \\ 
if $0\lest j \lest r-2$ & $0\lest k \lest \floor{\frac{j+1}{2}}$ & $0\lest k \lest \floor{\frac{j+1}{2}}$ & $0\lest k \lest \floor{\frac{j}{2}} \big\}$\\
& &  & $\cup \set{\mca{O}_{((j+1)^2, 1^{2r-2j-2})}}$  \\
\hline
$\frac{r}{2}$, & $k$, & $\prod_{a=-k}^{-1} \tau^{a+k} (\mbm{d}_{r, r+2a+1})$, & $\mca{O}_{(2r-2k-1, 2k+1)}$, \\
if $j=r-1$ & $1\lest k \lest \frac{r}{2}$ & $1\lest k \lest \frac{r}{2}$ & $1\lest k \lest \frac{r}{2}-1$ \\
& & & and $\begin{cases}{\mca{O}_{(r,r)}^I},&{r	\equiv 0\ mod\ 4}\\{\mca{O}_{(r,r)}^{II}},&{r\equiv 2\ mod\ 4}\end{cases}$ \\ 
\hline
\end{tabular}
\end{table}
\vskip 10pt

\begin{eg}
The graph of $\varphi_{B_6}$ is given as follows:
$$
\begin{tikzpicture}[domain=-0.2:4]
 \draw[very thin,color=gray] (-0.1,-0.1) grid (6.9,3.9);
 \draw[->] (-0.2,0) -- (7.2,0) node[right] {$j$};
  \put(0.4,-0.15){$1$} 
    \put(0.8,-0.15){$2$} 
   \put(1.2,-0.15){$3$} 
      \put(1.6,-0.15){$4$}
         \put(2,-0.15){$5$}  
              \put(2.4,-0.15){$6$}
 \draw[->] (0,-1.2) -- (0,4.2) node[above] {$\varphi_{B_6}(j)$};
  \put(-0.15,0.4){$1$} 
  \put(-0.15,0.8){$2$}   
   \put(-0.15,1.2){$3$}  
\put(0,0.4){\circle*{0.07}}  
 \put(0.4,0.4){\circle*{0.07}} 
\put(0.8,0.8){\circle*{0.07}} 
\put(1.2,0.8){\circle*{0.07}} 
\put(1.6,1.2){\circle*{0.07}} 
\put(2,1.2){\circle*{0.07}} 
\put(2.4,0.4){\circle*{0.07}} 
\end{tikzpicture}
$$
On the other hand, we have the graph for $\varphi_{D_6}$ below:
$$
\begin{tikzpicture}[domain=-0.2:4]
 \draw[very thin,color=gray] (-0.1,-0.1) grid (6.9,4.2);
 \draw[->] (-0.2,0) -- (7.2,0) node[right] {$j$};
  \put(0.4,-0.15){$1$} 
    \put(0.8,-0.15){$2$} 
   \put(1.2,-0.15){$3$} 
      \put(1.6,-0.15){$4$}
         \put(2,-0.15){$5$}  
              \put(2.4,-0.15){$6$}
 \draw[->] (0,-1.2) -- (0,4.5) node[above] {$\varphi_{D_6}(j)$};
  \put(-0.15,0.4){$1$} 
  \put(-0.15,0.8){$2$}   
   \put(-0.15,1.2){$3$} 
      \put(-0.15,1.6){$4$}   
\put(0,0.4){\circle*{0.07}}  
 \put(0.4,0.8){\circle*{0.07}} 
\put(0.8,0.8){\circle*{0.07}} 
\put(1.2,1.2){\circle*{0.07}} 
\put(1.6,1.2){\circle*{0.07}} 
\put(2,1.2){\circle*{0.07}} 
\put(2.4,0.4){\circle*{0.07}} 
\end{tikzpicture}
$$
\end{eg}

\subsection{Proof of Theorem \ref{T:ABD}}

\def\ll{\underset {L}{\leq}}
\def\rl{\underset {R}{\leq}}
\def\lr{\rl}
\def\lrl{\underset {LR}{\leq}}
\def\llr{\lrl}
\def\el{\underset {L}{\sim}}
\def\er{\underset {R}{\sim}}
\def\elr{\underset {LR}{\sim}}


The proof follows from a case by case discussion. Throughout this subsection, for every $1\lest i \lest r$, we retain the notation
$$s_i:= s_{\alpha_i} \in W$$
for every simple reflection, where $\alpha_i$ for each type is labelled as in Table \ref{T:ABDG}.

\begin{prop} \label{P:A}
Let $\Delta$ be of type $A_{r}$. Then the $\bfa$-function takes the constant value $j(j+1)/2$ on $\mca{C}_{S_j}$ for every $0\lest j \lest r$. Also, $\mfr{C}^{\rm LR}(S_j) = \set{\mca{C}}$ and $\msc{N}^{\rm spe}(S_j)= \set{\mca{O}_{(r-j+1, 1^j)}}$.
\end{prop}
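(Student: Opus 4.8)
The plan is to identify $W$ with the symmetric group $\mfr{S}_{r+1}$, writing $s_i = (i,\,i+1)$ so that $W(S_j)$ is the parabolic subgroup $\mfr{S}_{j+1}$ permuting $\set{1,\dots,j+1}$. The whole proposition then reduces to a single $\bfa$-value computation via Lemma \ref{L:YN12}(iii). Indeed, by Proposition \ref{P:dual} the two distinguished elements $w_{S_j}$ and $w_{S_j^*}\cdot w_\Delta$ both lie in $\mca{C}_{S_j}$ and bound it in the $\lest_{\rm R}$-order, so it suffices to prove
\[
\bfa\big(w_{S_j^*}\cdot w_\Delta\big) = \bfa\big(w_{S_j}\big) = \frac{j(j+1)}{2}.
\]
Granting this equality, Lemma \ref{L:YN12}(iii) immediately yields $\val{\mfr{C}^{\rm LR}(S_j)} = 1$, and since $\bfa$ is constant on two-sided cells it is then constant on all of $\mca{C}_{S_j}$ with the stated value.

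To obtain the two $\bfa$-values I would use the classical fact that in type $A_r$ the two-sided cells of $\mfr{S}_{r+1}$ are indexed by partitions $\lambda \vdash r+1$ via the Robinson--Schensted shape, and the $\bfa$-function on the cell of $\lambda$ equals $n(\lambda) = \sum_i (i-1)\lambda_i = \sum_i \binom{\lambda_i'}{2}$. It therefore suffices to read off the shapes. In one-line notation $w_{S_j}$ is $[\,j+1, j, \dots, 1,\, j+2, \dots, r+1\,]$, and a short computation of the composite gives $w_{S_j^*}\cdot w_\Delta = [\,j+1, j+2, \dots, r+1,\, j, j-1, \dots, 1\,]$. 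By Greene's theorem the Robinson--Schensted shape is determined by the lengths of the longest increasing and decreasing subsequences: for both words the longest increasing subsequence has length $r-j+1$ and the longest decreasing subsequence has length $j+1$, while the total number of letters is $r+1$, forcing the shape to be $(r-j+1, 1^j)$ in each case. Hence both elements lie in the same two-sided cell, and $n\big((r-j+1,1^j)\big) = \binom{j+1}{2} = j(j+1)/2$ gives the asserted common $\bfa$-value.

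This already identifies $\mfr{C}^{\rm LR}(S_j)$ as the single two-sided cell $\mca{C}$ attached to the partition $(r-j+1, 1^j)$. For the last assertion I would recall that in type $A$ every irreducible representation is special, and the Springer correspondence (normalized so that $f_{\rm co}$ is order-preserving, as in \S\ref{S:cells}) sends $\chi_\lambda$ to $\mca{O}_\lambda$; since $\rho_{\mca{C}}^{\rm spe} = \chi_{(r-j+1,1^j)}$, this gives $\msc{N}^{\rm spe}(S_j) = \set{\mca{O}_{(r-j+1,1^j)}}$. The only point requiring genuine care is the shape computation, i.e. verifying that the two boundary elements $w_{S_j}$ and $w_{S_j^*}\cdot w_\Delta$ of $\mca{C}_{S_j}$ land in the \emph{same} cell; once this coincidence of shapes is in hand, everything else is formal. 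Because type $A$ admits the clean Robinson--Schensted/Greene description, I do not anticipate a serious obstacle here, and this case is genuinely easy compared with the $B_r$ and $D_r$ cases treated later, where the interval $[\bfa(w_{S_j^*}), \bfa(w_{S_j}\cdot w_\Delta)]_\N$ can be strictly longer than a point.
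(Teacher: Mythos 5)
Your proof is correct, and its skeleton coincides with the paper's: both arguments reduce, via Lemma \ref{L:YN12}(iii), to the single equality $\bfa(w_{S_j^*}w_\Delta)=\bfa(w_{S_j})=j(j+1)/2$, and both identify the special orbit through the Robinson--Schensted shape of $w_{S_j}$. Where you genuinely diverge is in how that equality is established. The paper stays inside Lusztig's cell machinery: it quotes \cite{Lus-CAW1}*{\S 10.5 and Theorem 5.4} to obtain $w_{S_j^*}w_\Delta \sim_{\rm L} w_{r-j+1,\dots,r}$, the longest element of the parabolic subgroup generated by $\set{s_{r-j+1},\dots,s_r}$ (again of type $A_j$), so both boundary elements carry the $\bfa$-value $j(j+1)/2$ of a type-$A_j$ parabolic longest element. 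You instead work concretely in $\mfr{S}_{r+1}$: you write both boundary elements in one-line notation, apply Greene's theorem to see that each has hook shape $(r-j+1,1^j)$ (the hook is indeed forced by $\lambda_1=r-j+1$, $\lambda_1'=j+1$, $\val{\lambda}=r+1$, and your one-line computations are correct), and then invoke the classical facts that the two-sided cells of $\mfr{S}_{r+1}$ are the fibers of the Robinson--Schensted shape map and that $\bfa$ equals $n(\lambda)$ on the cell of shape $\lambda$; your normalization checks out against $n((r+1))=0=\bfa(1)$ and $n((1^{r+1}))=r(r+1)/2=\bfa(w_\Delta)$. Your route is more elementary and self-contained, and it proves slightly more than is needed --- the two boundary elements lie in the \emph{same} two-sided cell, not merely have equal $\bfa$-values --- though the sandwich step via Lemma \ref{L:YN12}(iii) is still required to sweep up all of $\mca{C}_{S_j}$, and you correctly retain it. The paper's citation-based argument is shorter and, more importantly, is the template it reuses in types $B_r$ and $D_r$, where no clean Robinson--Schensted/Greene description of cells is available; your type-$A$-specific argument would not transport there. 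For the final assertion, your appeal to the type-$A$ Springer correspondence $\chi_\lambda \mapsto \mca{O}_\lambda$ (every representation in type $A$ being special, with the normalization matching the order-preserving $f_{\rm co}$ of \S\ref{S:cells}) is exactly the paper's last step.
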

\begin{proof}
By Lemma \ref{L:YN12}, we know that 
$$w_{S_j^*}w_\Delta \lest_{\rm R} x \lest_{\rm R} w_{S_j} \text{ and } \bfa(w_{S_j^*}w_\Delta) \gest \bfa(x)\gest \bfa(w_{S_j})$$
for every $x\in \mca{C}_{S_j}$.
Thus it suffices to prove $\bfa(w_{S_j^*}w_\Delta)=\bfa(w_{S_j})$.
Let $w_{r-j+1, r-j+2, \cdots, r}$ be the longest element of the parabolic Weyl subgroup generated by the simple reflections 
$$\set{s_{r-j+1}, s_{r-j+2}, \cdots, s_r }.$$ Then by \cite[\S 10.5 and Theorem 5.4]{Lus-CAW1}, we have 
$w_{S_j^*}w_\Delta \sim_{\rm L} w_{r-j+1, r-j+2, \cdots, r}$ and 
$$\bfa(w_{S_j^*}w_\Delta)=\bfa(w_{r-j+1, r-j+2, \cdots, r})=\bfa(w_{S_j})=\frac{j(j+1)}{2}.$$

This shows $\mfr{C}^{\rm LR}(S_j) = \set{\mca{C}}$ is a singleton. The last assertion follows from applying the Robinson--Schensted insertion algorithm to $w_{S_j}$ (see \cite[\S 3.1]{Sag01-B} or \cite[\S 4.1]{BX19}), from which we obtain the nilpotent orbit  $\mca{O}_{(r-j+1, 1^j)}$ associated with $\sigma_\mca{C}$.
\end{proof}

The above Proposition gives Table \ref{T:A}.

\begin{prop} \label{P:B}
Let $\Delta$ be of type $B_{r}$ and $1\lest j \lest r-1$. Then the $\bfa$-function values on $\mca{C}_{S_j^*}$ are 
$$\set{(r-j)^2+k: \ 0\lest k \lest \floor{\frac{j}{2}}}.$$
Also, $\mfr{C}^{\rm LR}(S_j^*) = \set{\mca{C}_k: \ 0\lest k \lest \floor{\frac{j}{2}}}$ and the corresponding set of special nilpotent orbits is 
$$\msc{N}^{\rm spe}(S_j^*)=\set{\mca{O}_{(2j+1-2k, 2k+1,1^{2r-2j-1})}: \ 0\lest k \lest \floor{\frac{j}{2}}}.$$
\end{prop}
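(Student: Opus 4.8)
The plan is to follow the three-step strategy (S1)--(S3) preceding Theorem~\ref{T:ABD}, working in the signed-permutation model of $W(B_r)$, where $s_i=(i,i+1)$ for $1\lest i\lest r-1$ and $s_r$ is the sign change in the last coordinate. Here $w_\Delta$ is the all-negative element $i\mapsto -i$; $w_{S_j^*}$ is the longest element of the parabolic $W(B_{r-j})$ acting on $\{j+1,\dots,r\}$ (so $i\mapsto i$ for $i\lest j$ and $i\mapsto -i$ for $i>j$); and a short computation gives $w_{S_j}w_\Delta$ in one-line notation as $[-(j+1),-j,\dots,-1,-(j+2),\dots,-r]$. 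For each relevant element I will track two invariants: its two-sided cell, computed through the type-$B$ Robinson--Schensted (domino) correspondence, which pins down the associated special orbit in $\mfr{so}_{2r+1}$; and its $\bfa$-value, which I read off from that orbit via the identity $\bfa(\mca{C}) = r^2 - \tfrac12\dim\mca{O}$ (equivalently, the dimension of the Springer fibre over $\mca{O}=f_{\rm co}(\mca{C})$), using the standard dimension formula for orthogonal nilpotent orbits in terms of the transpose partition.

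For (S1) I compute the endpoints of the $\bfa$-interval on $\mca{C}_{S_j^*}$. As in Proposition~\ref{P:A}, the $\bfa$-value of the longest element of a parabolic equals the number of positive roots of the subsystem, so $\bfa(w_{S_j^*})=(r-j)^2$; running the domino correspondence on $w_{S_j^*}$ identifies its orbit as $\mca{O}_{(2j+1,\,1^{2r-2j})}$, whose dimension indeed yields $\bfa=(r-j)^2$. Inserting the all-negative element $w_{S_j}w_\Delta$ produces the orbit $\mca{O}_{(j+1,\,j+1,\,1^{2r-2j-1})}$ when $j$ is even and $\mca{O}_{(j+2,\,j,\,1^{2r-2j-1})}$ when $j$ is odd; the dimension formula gives $\bfa(w_{S_j}w_\Delta)=(r-j)^2+\floor{\tfrac{j}{2}}$ in both parities. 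Thus, by Lemma~\ref{L:YN12}(i), $\bfa$ takes values in $[(r-j)^2,\,(r-j)^2+\floor{j/2}]_\N$ on $\mca{C}_{S_j^*}$, an interval containing $\mfr{m}_j=\floor{j/2}+1$ integers.

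The two matching bounds come from (S2) and (S3). For the lower bound I use the explicit hook products $x_k=\left(\prod_{a=-k}^{-1}\mbm{b}_{j+1,\,j+1+2a}\right)w_{S_j^*}$, $0\lest k\lest\floor{j/2}$, and verify directly that each has left descent set exactly $S_j^*$ (so $x_k\in\mca{C}_{S_j^*}$) and that the domino correspondence sends $x_k$ to the orbit $\mca{O}_{(2j+1-2k,\,2k+1,\,1^{2r-2j-1})}$, whence $\bfa(x_k)=(r-j)^2+k$; as these $\bfa$-values are distinct and $\bfa$ is constant on two-sided cells, the $x_k$ lie in $\floor{j/2}+1$ distinct cells and $\varphi_{B_r}(j)\gest\floor{j/2}+1$. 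For the upper bound I enumerate the special orbits of $\mfr{so}_{2r+1}$ lying between $\mca{O}_{\rm min}^{S_j^*}$ and $\mca{O}_{\rm max}^{S_j^*}$ in the closure order: the partitions $(2j+1-2k,2k+1,1^{2r-2j-1})$ have only odd parts (hence are valid and special for type $B$), form a chain in dominance order with $\bfa$-values $(r-j)^2+k$, and one checks no further special orbit lies strictly inside the interval, so $\val{\msc{N}^{\rm spe}(S_j^*)}=\floor{j/2}+1$ and Lemma~\ref{L:co}(ii) gives $\varphi_{B_r}(j)\lest\floor{j/2}+1$. Combining, $\varphi_{B_r}(j)=\floor{j/2}+1$, the $\bfa$-function separates the cells of $\mfr{C}^{\rm LR}(S_j^*)$, and $\msc{N}^{\rm spe}(S_j^*)$ is precisely the listed family.

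The main obstacle is the combinatorial verification in (S2): for the intricate products $x_k$ of hook elements $\mbm{b}_{i,q}$, one must pin down both the exact left descent set and the shape produced by the domino correspondence. Since computing the $\bfa$-value of a non-parabolic element in general requires running the full insertion algorithm, the delicate point is to track how the successive factors $\mbm{b}_{j+1,\,j+1+2a}$ reshape the inserted dominoes so as to build up precisely the two rows of lengths $2j+1-2k$ and $2k+1$. A secondary difficulty lies in (S3): one must confirm that the dominance interval between the two endpoint orbits contains no special orbit beyond the listed chain, which is a finite but careful check in the combinatorics of type-$B$ symbols.
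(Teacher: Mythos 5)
Your proposal is correct in outline and follows the same (S1)--(S3) skeleton as the paper: same endpoint computations, same witness elements $x_k=B_k=\bigl(\prod_{a=-k}^{-1}\mbm{b}_{j+1,j+1+2a}\bigr)w_{S_j^*}$, and the same count of special partitions between $(2j+1,1^{2r-2j})$ and $(2j+1-2\floor{\frac{j}{2}},2\floor{\frac{j}{2}}+1,1^{2r-2j-1})$. The one genuine divergence is how the $\bfa$-values of the witnesses are pinned down in (S2). You propose to run the type-$B$ domino correspondence on each $x_k$ and read off $\bfa$ from the Springer-fibre dimension $r^2-\tfrac{1}{2}\dim\mca{O}$, and you rightly flag this per-element insertion as the main obstacle. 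The paper sidesteps exactly this: from \cite{Lus-CAW1}*{Theorem 5.4 and \S 10.5} and \cite{Lus-CAW2}*{Corollary 1.9(d)} it gets the monotonicity $\bfa(B_{k+1})\gest\bfa(s_{j-2k-1}\cdot B_k)=\bfa(B_k)+1$, and then the chain of inequalities, squeezed between $\bfa(B_0)=(r-j)^2$ and the upper bound $(r-j)^2+\floor{\frac{j}{2}}$ coming from Lemma \ref{L:YN12}(i), forces $\bfa(B_k)=(r-j)^2+k$ with no insertion computation at all; the combinatorial identification of the orbits is then delegated to \cite{BMW} and \cite{BXX23} rather than carried out by hand. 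So your route is valid but computationally heavier at precisely the point you identify as delicate, whereas the paper's squeeze argument is the device that removes that difficulty; conversely, your method, if executed, yields the orbit of each $x_k$ directly rather than by citation. Note also that both your write-up and the paper leave two checks implicit: that ${\rm Desc}_{\rm L}(x_k)=S_j^*$ (so the witnesses really lie in $\mca{C}_{S_j^*}$), and that no special partition other than the listed chain lies in the dominance interval.
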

\begin{proof}
First, we have 
$$\bfa(w_{S_j^*})=(r-j)^2$$
by \S 1.3 and Corollary 1.9(d) in \cite{Lus-CAW2} with $w_{S_j^*}$ corresponding to the special partition $(2j+1, 1^{2r-2j})$ by Robinson--Schensted insertion algorithm. Meanwhile, since 
$$w_{S_j}w_\Delta \sim_{\rm L} 
\begin{cases}
s_1s_3\cdots s_{j-3}s_{j-1}w_{S_j^*} & \text{ if $j$ is even},\\
s_2s_4\cdots s_{j-3}s_{j-1}w_{S_j^*} & \text{ if $j$ is odd},
\end{cases}
$$
we have 
$\bfa(w_{S_j}w_\Delta)=(r-j)^2+\floor{\frac{j}{2}}$ with $w_{S_j}w_\Delta$ corresponding to the special nilpotent orbit 
$$\mca{O}_{ (2j+1-2\floor{\frac{j}{2}}, 2\floor{\frac{j}{2}}+1, 1^{2r-2j-1}) } \in \msc{N}^{\rm spe}(S_j^*).$$ 

By Lemma \ref{L:YN12}, we know that $\bfa(w_{S_j^*})\lest \bfa(x)\lest \bfa(w_{S_j}w_\Delta)$ for every $x\in \mca{C}_{S_j^*}$. It is easy to check that there are exactly $\floor{\frac{j}{2}}-1$ special partitions properly lying between $(2j+1, 1^{2r-2j})$ and $(2j+1-2\floor{\frac{j}{2}}, 2\floor{\frac{j}{2}}+1, 1^{2r-2j-1})$. More precisely, the special partition associated to any two-sided cell $\mca{C} \in \mfr{C}^{\rm LR}(S_j^*)$ is of the form $(2j+1-2k, 2k+1, 1^{2r-2j-1})$ with the value $\bfa(\mca{C})$ equal to $(r-j)^2+k$, where $0\lest k\lest \floor{\frac{j}{2}}$.

Now it suffices to show there are $\floor{\frac{j}{2}}+1$ elements in $\mca{C}_{S_j^*}$ with distinct $\bfa$-function values. For every $1\lest i \lest r$ and $1\lest q \lest r-1$ we write
$$\mbm{b}_{i,q}:=s_i s_{i+1} ... s_{r-1} s_r s_{r-1} ... s_{q+1} s_q \in W.$$
and $\mbm{b}_{r,r}:=s_r$ as in \eqref{D:bb}. Set 
$$B_k=\left( \prod_{a=-k}^{-1} \mbm{b}_{j+1, j+1+2a} \right) \cdot w_{S_j^*}$$ 
with $1\lest k\lest \floor{\frac{i}{2}}$  and $B_0:=w_{S_j^*}$.
Then for every $0\lest k\lest \floor{\frac{j-2}{2}}$, we have 
$$B_{k+1}= \mbm{b}_{i+1, i-2k-1} \cdot B_k$$
and 
$$\bfa(B_{k+1})\gest \bfa(s_{j-2k-1} \cdot B_k)=\bfa(B_k)+1$$ 
by \cite[Theorem 5.4 and \S 10.5]{Lus-CAW1} and \cite[Corollary 1.9(d)]{Lus-CAW2}. In fact, it follows from \cite[Definition 8.9]{BMW} and \cite[Theorem 1.3]{BXX23} that $\bfa(B_{k})=(r-j)^2+k$ and the corresponding special nilpotent orbit is $\mca{O}_{(2j+1-2k, 2k+1, 1^{2r-2j-1})}$. This concludes the proof.
\end{proof}

\begin{prop} \label{P:D}
Let $\Delta$ be of type $D_{r}$ and consider $S_j$ with $1\lest j \lest r-1$.
\begin{enumerate}
\item[(i)] Assume $1\lest j\lest r-2$. Then the $\bfa$-function values on $\mca{C}_{S_j^*}$ are 
$$\set{(r-j)(r-j-1)+k:\ 0\lest k\lest \floor{\frac{j+1}{2}}}.$$
Moreover, one has $\mfr{C}^{\rm LR}(S_j^*)=\set{\mca{C}_k: \ 0\lest k\lest \floor{\frac{j+1}{2}}}$ with the associated special nilpotent orbits
$$\msc{N}^{\rm spe}(S_j^*)= \set{\mca{O}_{(j+1,j+1, 1^{2r-2j-2})}} \cup {\set{ \mca{O}_{(2j+1-2k, 2k+1,1^{2r-2j-2})}: \ 0\lest k\lest \floor{\frac{j}{2}} }}.$$
(Note that if $j$ is even, then the orbit $\mca{O}_{(j+1,j+1, 1^{2r-2j-2})}$ already appears in the right set of the above union.)
\item[(ii)] Assume $j=r-1$. Then the $\bfa$-function values on the $\mca{C}_{S_{r-1}^*}$ are
$$\set{k: 1\lest k \lest \floor{r/2}},$$
and $\mfr{C}^{\rm LR}(S_{r-1}^*) = \set{\mca{C}_k: \ 1\lest k \lest \floor{r/2}}$. Moreover, one has
$$
\msc{N}^{\rm spe}(S_{r-1}^*) =
\begin{cases}
\set{ \mca{O}_{(2r-2k-1, 2k+1)}: \ 1\lest k \lest \floor{\frac{r}{2}} } & \text{ if $r$ is odd},\\
\set{ \mca{O}_{(2r-2k-1, 2k+1)}: \ 1\lest k \lest r/2-1} \cup \set{\mca{O}_{(r,r)}^I} & \text{ if $r\equiv 0\ mod\ 4$}\\
\set{ \mca{O}_{(2r-2k-1, 2k+1)}: \ 1\lest k \lest r/2-1} \cup \set{\mca{O}_{(r,r)}^{II}} & \text{ if $r\equiv 2\ mod\ 4$}.
\end{cases}
$$
\end{enumerate}
\end{prop}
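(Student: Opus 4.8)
The plan is to run the three-step scheme (S1)--(S3) exactly as in the proof of Proposition \ref{P:B}, adapting each step to the $D_r$ diagram, whose fork $\set{\alpha_{r-1}, \alpha_r}$ is responsible for all the new phenomena. Accordingly I treat the two regimes separately: for $1\lest j \lest r-2$ the set $S_j^* = \set{\alpha_{j+1}, \dots, \alpha_r}$ generates a parabolic of type $D_{r-j}$, while for $j=r-1$ it reduces to $\langle s_r \rangle$.

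For step (S1) I would first pin down the two endpoints of $\bfa(\mca{C}_{S_j^*})$. When $1\lest j \lest r-2$ the element $w_{S_j^*}$ is the longest element of the parabolic $W(D_{r-j})$, so the $D_r$ symbol/partition formula of \cite{Lus-CAW2} (used exactly as for $B_r$) gives $\bfa(w_{S_j^*}) = (r-j)(r-j-1)$, the number of positive roots of $D_{r-j}$, with associated special orbit $\mca{O}_{(2j+1, 1^{2r-2j-1})}=\mca{O}_{\rm max}^{S_j^*}$. For the other end I would establish a left-cell equivalence $w_{S_j} w_\Delta \sim_{\rm L} (\prod \mbm{d}_{\dots}) \cdot w_{S_j^*}$ --- the $D_r$ analogue of the relation $w_{S_j} w_\Delta \sim_{\rm L} s_1 s_3 \cdots w_{S_j^*}$ of Proposition \ref{P:B} --- and read off $\bfa(w_{S_j} w_\Delta) = (r-j)(r-j-1) + \floor{(j+1)/2}$ from \cite{Lus-CAW1}*{\S 10.5, Theorem 5.4} and \cite{Lus-CAW2}*{Corollary 1.9(d)}. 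By Lemma \ref{L:YN12}(i) this places $\bfa(\mca{C}_{S_j^*})$ inside the interval of length $\mfr{m}_j = \floor{(j+1)/2}+1$ with these endpoints.

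For (S2) and (S3) in this regime I would use the explicit elements $B_k = (\prod_{a=-k}^{-1}\mbm{d}_{j+1, j+1+2a})\cdot w_{S_j^*}$ from \eqref{D:dd} and argue as in Proposition \ref{P:B}: the recursion $B_{k+1}=\mbm{d}_{\dots}\cdot B_k$ together with Lusztig's monotonicity forces $\bfa(B_k)=(r-j)(r-j-1)+k$, while \cite{BMW}*{Definition 8.9} and \cite{BXX23}*{Theorem 1.3} identify the orbit of $B_k$ as $\mca{O}_{(2j+1-2k, 2k+1, 1^{2r-2j-2})}$, with the two-row orbit $\mca{O}_{((j+1)^2, 1^{2r-2j-2})}$ realizing the top $\bfa$-value; hence $\varphi_{D_r}(j)\gest \mfr{m}_j$. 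For the reverse inequality I would enumerate the special orthogonal partitions of $2r$ lying between $(2j+1, 1^{2r-2j-1})$ and $((j+1)^2, 1^{2r-2j-2})$ in the closure order, check that there are exactly $\mfr{m}_j$ of them, and carefully record the parity coincidence that for $j$ even the orbit $\mca{O}_{((j+1)^2, \dots)}$ already occurs in the one-parameter family. Lemma \ref{L:co}(ii) then gives $\varphi_{D_r}(j)\lest \mfr{m}_j$, yielding equality and separation of the cells by $\bfa$.

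The case $j=r-1$, where $w_{S_{r-1}^*}=s_r$ has $\bfa(s_r)=1$ and the target interval is $[1,\floor{r/2}]_\N$, is where the genuine $D_r$ difficulties concentrate and is where I expect the real work to lie. Realizing the intermediate $\bfa$-values now requires the $\tau$-twisted words $\prod_{a=-k}^{-1}\tau^{a+k}(\mbm{d}_{r, r+2a+1})$: the automorphism $\tau$ swapping $\alpha_{r-1}$ and $\alpha_r$ is forced in because reflections passing through the fork must alternate between its two branches, a feature absent in type $B_r$. The delicate point in (S3) is the splitting of very even orbits: when $r$ is even the two-row partition $(r,r)$ is very even and labels two distinct nilpotent orbits $\mca{O}_{(r,r)}^I$ and $\mca{O}_{(r,r)}^{II}$ interchanged by $\tau$, and one must show that exactly one of them --- namely $\mca{O}_{(r,r)}^I$ --- lies in $\msc{N}^{\rm spe}(S_{r-1}^*)$, so that the count is $r/2$ rather than $r/2\pm 1$. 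Pinning down which branch is hit by $w_{S_{r-1}} w_\Delta$, and more generally making the $D_r$ domino-tableaux bookkeeping of \cite{Sag01-B}, \cite{BX19} precise at the fork, is the technical heart of the argument; the remaining inequalities then follow from Lemmas \ref{L:YN12} and \ref{L:co} as before.
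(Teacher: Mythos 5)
Your proposal is correct and follows essentially the same route as the paper's own proof: the identical (S1)--(S3) scheme, with the endpoint values $\bfa(w_{S_j^*})=(r-j)(r-j-1)$ and $\bfa(w_{S_j}w_\Delta)=(r-j)(r-j-1)+\floor{\frac{j+1}{2}}$, the explicit chain of elements built from the $\mbm{d}_{i,q}$ (with the $\tau$-twisted products when $j=r-1$), Lusztig monotonicity squeezed against the bound of Lemma \ref{L:YN12}(i), the count of special partitions between the two extreme orbits, and the identification of the orbits (including the very even orbit $\mca{O}_{(r,r)}^I$ for $r$ even) via \cite{BMW} and \cite{BXX23}. The only cosmetic differences are that you obtain the top endpoint by a left-cell equivalence mimicking Proposition \ref{P:B} and propose domino-tableau bookkeeping at the fork, where the paper simply invokes \cite{BMW}*{Definition 8.11, Lemma 7.10, Remark 7.11} and \cite{BXX23}*{Theorem 1.3}.
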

\begin{proof}
For (i), similar to the proof of Proposition \ref{P:B}, it is easy to get 
$$\bfa(w_{S_j^*})=(r-j)(r-j-1)$$ with $w_{S_j^*}$ corresponding to the special nilpotent orbit $\mca{O}_{(2j+1,1^{2r-2j-1})}$. Also, $\bfa(w_{S_j}w_\Delta)=(r-j)(r-j-1)+\floor{\frac{j+1}{2}}$, while $w_{S_j}w_\Delta$ corresponds to $\mca{O}_{(j+1, j+1,1^{2r-2j-2})}$. These follow from \cite[8.11]{BMW} and \cite[Theorem 1.3]{BXX23}, for example.

There are exactly $\floor{\frac {j-1}{2}}$ special partitions between $(2j+1, 1^{2r-2j-1})$ and $(j+1, j+1,1^{2r-2j-2})$. Thus, it suffices to show there are $\floor{\frac {j+1}{2}}+1$ elements in $\mca{C}_{S_j^*}$ with distinct $\bfa$-function values. 
For every $1\lest i \lest r$ and $1\lest q \lest r-2$, define
$$\mbm{d}_{i,q}:=s_i s_{i+1} ... s_{r-1} s_r s_{r-2} s_{r-3} ... s_{q+1} s_q \in W$$
as in \eqref{D:dd} and set $\mbm{d}_{r,r-1}:=\mbm{d}_{r,r}:=s_r \in W$.
Let $\tau\in {\rm Aut}(W)$ be the unique element such that 
$$\tau(s_r) = s_{r-1}, \quad \tau(s_l)=s_l$$
for every $1\lest l\lest r-2$. 

When $1\lest j\lest r-2$ and $S_j^*=\set{s_{j+1}, s_{j+2}, \cdots, s_{r}}$, for every $1\lest k\lest \floor{\frac{j}{2}}$ we set 
$$D_k:=\left( \prod_{a=-k}^{-1} \mbm{d}_{j+1, j+2+2a} \right) \cdot w_{S_j^*}$$ 
and $D_0:=w_{S_j^*}$.
Then for every $1\lest k\lest \floor{\frac{i-1}{2}}$, we have 
$$D_{k+1}= \mbm{d}_{j+1, j-2k} \cdot D_k.$$
Also, $\bfa(D_{k+1})\gest \bfa(s_{j-2k}D_k)\gest \bfa(D_k)+1$ by Theorem 5.4 and Corollary 5.5 of \cite{Lus-CAW1} and Corollary 1.9(d) of \cite{Lus-CAW2}.

For $D_1=\mbm{d}_{j+1, j}D_0=\mbm{d}_{j+1, j}w_{S_j^*}$, we get from \cite[\S 1.3]{Lus-CAW2} that 
$$\bfa(\mbm{d}_{j+1, j}D_0)\lest \bfa(w_{S_j^*})+1.$$ 
On the other hand,  we have
$$\mbm{d}_{j+1, j}D_0 \sim_{\rm L} \mbm{d}_{j+1, j-1} \mbm{d}_{j+1, j} D_0 =\mbm{d}_{j+1, j-1} D_0 \mbm{d}_{j+1, j}$$ 
by \cite[\S 10.5]{Lus-CAW1} and thus
$$\bfa(\mbm{d}_{j+1, j-1}d_{j+1, j}D_0)\gest \bfa(\mbm{d}_{j+1, j-1}D_0)\gest \bfa(s_{j-1} w_{S_j^*})=\bfa(w_{S_j^*})+1$$ 
by \cite[Theorem 5.4]{Lus-CAW1} and \cite[\S 1.3 and Corollary 1.9(d)]{Lus-CAW2}. This gives $\bfa(\mbm{d}_{j+1, j}D_0)=\bfa(w_{S_j^*})+1$.
In fact, using \cite[Definition 8.11]{BMW} and \cite[Theorem 1.3]{BXX23}, we have $\bfa(D_{k})=(r-j)(r-j-1)+k$ and the corresponding special nilpotent orbit is $\mca{O}_{(2j+1-2k, 2k+1, 1^{2r-2j-2})}$.

For (ii), when $j=r-1$ and $S_j^*=\set{s_r}$, we set 
$$D'_k=\prod_{a=-k}^{-1}\tau^{a+k}(\mbm{d}_{r, r+1+2a})$$
for every $1\lest k\lest \floor{\frac{r}{2}}$.
Then for any $1\lest k\lest \floor{\frac{r-2}{2}}$, we have $D'_{k+1}=
\mbm{d}_{r, r-2k-1}\cdot \tau(D'_k)$ and 
$$\bfa(D'_{k+1})\gest \bfa(s_{r-2k-1}\cdot \tau(D'_k) )\gest \bfa(D'_k)+1$$
by Theorem 5.4 and Corollary 5.5 of \cite{Lus-CAW1} and Corollary 1.9(d) of \cite{Lus-CAW2} and the fact $\bfa(\tau(x))=\bfa(x)$ for every $x\in W$.
Again, using \cite[Definition 8.11]{BMW} and \cite[Theorem 1.3]{BXX23}, we have $\bfa(D'_k)=k$ and the corresponding special nilpotent orbit is $\mca{O}_{(2r-2k-1, 2k+1)}$ for $1\lest k\lest \floor{\frac{r}{2}}-1$. Also, $\bfa(D'_{\floor{\frac{r}{2}}})=\floor{\frac{r}{2}}$ with the corresponding special nilpotent being $\mca{O}_{(r, r)}$ when $r$ is odd, $\mca{O}_{(r, r)}^I$ when $r\equiv 0\ mod\ 4$ and $\mca{O}_{(r, r)}^{II}$ when $r\equiv 2\ mod\ 4$ (see \cite[Lemma 7.10 and Remark 7.11]{BMW}). This completes the proof.
\end{proof}

\subsection{Further result for type $A_r$} \label{SS:Amore}
In the case of type $A_r$, we also consider 
\begin{equation} \label{F:Tj}
T_j:=S_j - \set{\alpha_1}= \set{\alpha_2, \alpha_3, ..., \alpha_j}
\end{equation} 
for $0\lest j \lest r$ and have the following result which complements Theorem \ref{T:ABD}.

\begin{prop} \label{P:Amore}
Let $\Delta$ be of type $A_r$,
and consider $T_j$ with $2\lest j\lest r-1$. 
We have  $\val{\mfr{C}^{\rm LR}(T_j)}=2$, and the two nilpotent orbits associated with $\mfr{C}^{\rm LR}(T_j)$ are $\mca{O}_{(r-j+2,1^{j-1})}$ and $\mca{O}_{(r-j+1,2,1^{j-2})}$ in this case.
\end{prop}
\begin{proof}
For the first assertion, in view of Lemma \ref{L:YN12}, we only need to show that  $\bfa(w_{T_j^*} \cdot w_\Delta) - \bfa(w_{T_j}) =1$.  Since $w_{T_j}$ is a distinguished involution, we get $\bfa(w_{T_j})=l(w_{T_j})=\frac{j(j-1)}{2}$ immediately by \cite[Proposition 2.4]{Lus-CAW1} and \cite[1.3]{Lus-CAW2} . Moreover, since
$$w_{T_j^*} \cdot w_\Delta \sim_{\rm LR} w_{1,(r+2-j), (r+3-j),\cdots, r},$$
which is also a distinguished involution, we get $\bfa(w_{T_j^*} \cdot w_\Delta)=l(w_{1,(r+2-j), (r+3-j),\cdots, r})=\frac{j(j-1)}{2}+1$. 

The second assertion follows from applying the Robinson--Schensted insertion algorithm to $w_{T_j}$ and $w_{T_j^*} \cdot w_\Delta$ (see \cite[\S 3.1]{Sag01-B} or \cite[\S 4.1]{BX19}), from which we obtain the corresponding nilpotent orbits  $\mca{O}_{(r-j+2,1^{j-1})}$ and $\mca{O}_{(r-j+1,2,1^{j-2})}$ respectively.
\end{proof}

%
%


\section{The Whittaker polynomial $\mca{P}_{G,S}(X)$} \label{S:WhP}

Henceforth, let $\mbf{G}$ be a split simply-connected almost simple group over a $p$-adic field $F$.  Let $Y$ be the cocharacter lattice of $\mbf{G}$, which is also equal to the coroot lattice in this case.  We use $W$ to denote the Weyl group of the coroot system of $\mbf{G}$, which acts naturally on $Y$. Thus, $W$ is generated by the simple reflections $\set{s_{\alpha^\vee}}$ for all simple coroot $\alpha^\vee \in \Delta^\vee$. Since we can naturally identify $W$ with the Weyl group of the root system by $s_\alpha \leftrightarrow s_{\alpha^\vee}$, we hope the usage of the notation $W$ does not cause confusion.

Consider the unique $W$-invariant quadratic form
$$Q: Y \longrightarrow \Z$$
such that $Q(\alpha^\vee) = 1$ for every short coroot $\alpha^\vee$ of $G$. Assume $F^\times$ contains the full  group $\mu_n$ of $n$-th roots of unity. We have an $n$-fold central cover of $G=\mbf{G}(F)$ as in 
$$\begin{tikzcd}
\mu_n \ar[r, hook] & \wt{G}^{(n)} \ar[r, two heads] & G.
\end{tikzcd}$$
For every subgroup $H\subseteq G$, we write $\wt{H}^{(n)}$ or simply $\wt{H}$ for the $n$-fold cover of $H$ obtained from restricting $\wt{G}^{(n)}$ to $H$. Fix an embedding 
$$\epsilon: \mu_n \into \C^\times,$$
we write $\Irr_\epsilon(\wt{G}^{(n)})$ for the set of isomorphism classes of irreducible $\epsilon$-genuine representations of $\wt{G}^{(n)}$, where $\mu_n$ acts via $\epsilon$.  For more detailed discussion on covering groups, see \cite{BD, GG, We6}. 
\subsection{Whittaker models for oasitic covers}
Below we follow closely the notation and exposition in \cite{GGK1} for introducing Whittaker model for representations of $\wt{G}^{(n)}$.

Let $B^- = T U^- \subset G$ be the opposite Borel subgroup associated with $-\Delta$, where $U^-$ is the opposite unipotent subgroup. Since $\wt{G}^{(n)}$ splits uniquely over $U^-$, we view $U^-$ as a subgroup of $\wt{G}^{(n)}$. 
Let $\psi: F \to \C^\times$ be a character of conductor $\mfr{p}_F$. We view it as a character of $U^-$ by requiring that
$$\psi(e_\alpha(x)):=\psi(x)$$
for every $\alpha \in -\Delta$, where $e_\alpha: F \to U_\alpha$ is a fixed pinning for the one-parameter subgroup $U_\alpha \subset G$ associated with $\alpha$.  Write
$$\mca{V}_\epsilon:={\rm ind}_{\mu_n \times U^-}^{\wt{G}^{(n)}} (\epsilon \times \psi)$$ for the $\epsilon$-genuine Gelfand--Graev representation of $\wt{G}^{(n)}$ with left action given by $(g\cdot f)(x):=f(xg), f \in \mca{V}_\epsilon$.  For every $\pi\in \Irr_\epsilon(\wt{G}^{(n)})$, its Whittaker model is
\begin{equation} \label{D:Wh}
\Wh_\psi^{G,n}(\pi):=\Hom_{\wt{G}^{(n)}}(\mca{V}_\epsilon, \pi).
\end{equation}
We call $\pi$ generic if $\Wh_\psi^{G,n}(\pi) \ne 0$.

Let $I$ be the Iwahori subgroup with a fixed splitting in $\wt{G}^{(n)}$. Let $\HH_{\bepsilon}(\wt{G}^{(n)}, I)$ be the $\bepsilon$-genuine Iwahori--Hecke algebra. Suppose $\pi \in \Irr_\epsilon(\wt{G}^{(n)})$ is Iwahori-spherical and corresponds to $\tau \in \Irr(\HH_{\bepsilon}(\wt{G}^{(n)}, I))$; we may write $\pi_\tau$ for $\pi$ to highlight this correspondence.  Then 
$$\Wh_\psi^{G,n}(\pi_\tau) \simeq \Hom_{\HH_{\bepsilon}(\wt{G}^{(n)}, I)}( (\mca{V}_\epsilon)^I, \tau).$$

In this paper, we concentrate exclusively on the oasitic covers $\wt{G}^{(n)}$ of $G$ as defined in \cite[\S 6.1]{GGK1}). For fixed $G$, we also call $n$ oasitic if $\wt{G}^{(n)}$ is an oasitic cover. The oasitic numbers are ``very good" in the sense of \cite{Som97}, as we mentioned in \S \ref{S:Intro}. They are tabulated in Table \ref{T:oas}.

\begin{table}[H]  
\caption{Oasitic covers}
\label{T:oas}
\vskip 10pt
\renewcommand{\arraystretch}{1.4}
\begin{tabular}{|c|c|c|c|c|c|c|c|c|c|c|}
\hline
 & $\SL_{r+1}$  &  $\Spin_{2r+1}$ & $\Sp_{2r}$  & $\Spin_{2r}$    \\
\hline
\hline
oasitic & $\gcd(n, r+1)=1$ & $n$ odd & $n$ odd & $n$ odd   \\ 
\hline
\end{tabular}
\vskip 10pt

\begin{tabular}{|c|c|c|c|c|c|c|c|c|c|c|}
\hline
 & $E_6$ & $E_7$& $E_8$ & $F_4$ & $G_2$   \\ 
\hline
\hline
oasitic & $2, 3\nmid n$ & $2, 3 \nmid n$& $2, 3, 5 \nmid n$ & $2, 3 \nmid n$ & $2, 3\nmid n$   \\  
\hline
\end{tabular}
\end{table}
\vskip 10pt

 Several properties of oasitic covers are as follows. First, we observe that for arbitrary $n$, the set $\msc{X}_n:=Y/nY$ affords a natural permutation representation 
$$\eta_{\msc{X}_n}: W \longrightarrow {\rm Perm}(\msc{X}_n)$$
given by $\eta_{\msc{X}_n}(w)(y):= w(y)$.  For oasitic covers, the module structure of $(\mca{V}_\epsilon)^I$ over $\HH_{\bepsilon}(\wt{G}^{(n)}, I)$, is given by (see \cite[Theorem 1.2]{GGK1})
$$(\mca{V}_\epsilon)^I \simeq V_{\msc{X}_n} \otimes_{\HH_W} \HH_{\bepsilon}(\wt{G}^{(n)}, I),$$
where $V_{\msc{X}_n}$ is a deformation of $\eta_{\msc{X}_n} \otimes \varepsilon_W$ such that
\begin{equation} \label{E:dform}
(V_{\msc{X}_n})_{q\to 1} \simeq \eta_{\msc{X}_n} \otimes \varepsilon_W
\end{equation}
as $W$-representations. Here $\HH_W \subset \HH_{\bepsilon}(\wt{G}^{(n)}, I)$ is the finite Weyl subalgebra, a deformation of $\C[W]$. Moreover, $(-)_{q\to 1}$ is the operation of sending $\HH_W$-modules to $W$-modules, and is an isometry, see \cite[Proposition 10.11.4]{Car}.

Hence, for every Iwahori-spherical $\pi_\tau \in \Irr_{\epsilon}(\wt{G}^{(n)})$, we have
\begin{equation} \label{cal-Wd}
\Wh_{\psi}^{G,n}(\pi_\tau) \simeq \Hom_{\HH_W}(V_{\msc{X}_n}, \sigma|_{\HH_W}) \simeq \Hom_{W}(\eta_{\msc{X}_n} \otimes \varepsilon_W, (\tau|_{\HH_W})_{q\to 1}).
\end{equation}
The dimension of $\Wh_\psi^{G,n}(\pi_\tau)$ is called the Whittaker dimension of $\pi_\tau$.

\subsection{Regular principal series} \label{SS:rps}
Now we specialize $\pi_\tau$ to be an irreducible constitute which lies in the same genuine principal series as the covering analogue of the Steinberg representation, see \cite{Rod4, Ga6}. We study exclusively in the remaining of this paper the Whittaker dimension of such $\pi_\tau$. More precisely, denote by $Z(\wt{T}) \subseteq \wt{T}$ the center of the covering torus $\wt{T}$. Consider a so-called exceptional unramified genuine character 
$$\chi_{\rm ex}: Z(\wt{T}) \to \C^\times$$
satisfying
$$\chi_{\rm ex}(\wt{h}_\alpha(\varpi^{n_\alpha})) = q^{-1}$$
for every $\alpha \in \Delta$, where $n_\alpha:=n/\gcd(n, Q(\alpha^\vee))$. The group $W$ acts naturally on $\chi_{\rm ex}$ with trivial stabilizer. 
Thus, $\chi$ is a regular genuine central character and the irreducible constituents of $I(\chi_{\rm ex})$ are all multiplicity-free. 
We denote its Jordan--Holder set by ${\rm JH}(I(\chi_{\rm ex}))$. 

Let $\hat{Y}:=\Hom_\Z(Y,\Z)$ be the character lattice of $G$.
Denote by
$$\msc{C}(\hat{Y} \otimes \R)$$
the set of connected components of 
$$\hat{Y} \otimes \R - \bigcup_{\alpha \in \Delta} {\rm Ker}(\alpha^\vee).$$
For every $S\subseteq \Delta$, we write $S^\vee:=\set{\alpha^\vee}_{\alpha \in S} \subseteq \Delta^\vee$. Clearly, $S \mapsto S^\vee$ gives a canonical bijection between $\msc{P}(\Delta)$ and $\msc{P}(\Delta^\vee)$. 
For every $E \subset \Delta^\vee$, we also write $E^*:=\Delta^\vee - E$. It is clear that
$$(S^*)^\vee = (S^\vee)^*$$
for every $S \subseteq \Delta$.
We denote by $\Phi_-^\vee$ the set of negative coroots.

\begin{prop}[{ \cite{Rod4, Ga6} }] \label{P:Rod}
There are natural bijections between the three sets
$$\msc{P}(\Delta) \longleftrightarrow {\rm JH}(I(\chi_{\rm ex})) \longleftrightarrow \msc{C}(\hat{Y}\otimes \R)$$
denoted by
$$S \leftrightarrow \pi_S \leftrightarrow \Gamma_S,$$
which is given as follows. First, we have
$$\Gamma_S = \set{x\in \hat{Y} \otimes \R: \angb{\alpha^\vee}{x} <0 \text{ if and only if } \alpha \in S}.$$
Second, the representation $\pi_S$ is characterized by its Jacquet module
$$(\pi_S)_U = \bigoplus_{w \in W_S} \delta_B^{1/2} \cdot i({}^{w^{-1}} \chi_{\rm ex}),$$
where  $W_S:= \set{w\in W: \Delta^\vee \cap w(\Phi_-^\vee) = S^\vee} \subset W$ is exactly the set $\mca{C}_{S^\vee} \subset W$ in \eqref{D:C_S} associated with the coroot system $\Delta^\vee$ and $S^\vee \subset \Delta^\vee$.
\end{prop}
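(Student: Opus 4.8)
The plan is to build the two bijections separately: the combinatorial part $\msc{P}(\Delta)\leftrightarrow \msc{C}(\hat{Y}\otimes\R)$ by direct inspection, and the analytic part $\msc{P}(\Delta)\leftrightarrow {\rm JH}(I(\chi_{\rm ex}))$ from the structure of the Jacquet module together with the regularity of $\chi_{\rm ex}$. For the combinatorial part, the assignment $S\mapsto \Gamma_S$ with $\Gamma_S=\set{x\in \hat{Y}\otimes\R: \angb{\alpha^\vee}{x}<0 \iff \alpha\in S}$ is manifestly a bijection: a point of $\hat{Y}\otimes\R$ avoiding every hyperplane $\Ker(\alpha^\vee)$ is determined up to connected component exactly by the sign vector $(\operatorname{sgn}\angb{\alpha^\vee}{x})_{\alpha\in\Delta}$, and each of the $2^{\val{\Delta}}$ sign patterns is realized by some chamber. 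This matches $S$ to the pattern ``negative on $S$, positive off $S$'', giving the required bijection with no representation theory involved.

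For the main bijection I would argue through Jacquet modules. Since $\chi_{\rm ex}$ is regular with trivial $W$-stabilizer, the geometric lemma shows that $I(\chi_{\rm ex})_U$ is glued from the $\val{W}$ genuine $\wt{T}$-representations $\delta_B^{1/2}\cdot i({}^{w^{-1}}\chi_{\rm ex})$, $w\in W$, and regularity forces these to be pairwise non-isomorphic. Hence $I(\chi_{\rm ex})_U$ is multiplicity-free with exactly $\val{W}$ irreducible pieces. Using exactness of the Jacquet functor together with the fact that an Iwahori-spherical constituent of an unramified principal series is determined by the support of its Jacquet module, I would conclude that the composition factors of $I(\chi_{\rm ex})$ induce a partition of $W$ into the supports $W_S$, with each $\pi_S$ appearing once (consistent with the multiplicity-freeness already noted from regularity). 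The remaining content is to identify each support block with the correct subset $S$.

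The hard part will be proving that the support of $\pi_S$ is exactly $W_S=\set{w: \Delta^\vee\cap w(\Phi_-^\vee)=S^\vee}$, i.e. that the partition of $W$ coming from the composition series coincides with the sign-chamber partition. Here I would analyze the rank-one genuine intertwining operators $A_{s_\alpha}$: the exceptional normalization $\chi_{\rm ex}(\wt{h}_\alpha(\varpi^{n_\alpha}))=q^{-1}$ places each wall-crossing in the Steinberg-type regime, so that the direction in which $w$ and $s_\alpha w$ are linked is governed precisely by the sign of $\angb{\alpha^\vee}{-}$ on the associated chamber. Translating this linkage into the grading of $I(\chi_{\rm ex})_U$ shows that $w, w'$ lie in the same constituent if and only if $\Delta^\vee\cap w(\Phi_-^\vee)=\Delta^\vee\cap w'(\Phi_-^\vee)$, which is exactly membership in a common $W_S$, hence in a common chamber $\Gamma_S$.

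Finally, the feature genuinely special to the cover, as opposed to Rodier's linear-group original \cite{Rod4}, is that one must verify the exceptional character and its metaplectic intertwining operators obey the same rank-one reducibility pattern, and that passing through the center $Z(\wt{T})$ does not perturb the parametrization. I would import these computations from \cite{Ga6}, where the genuine intertwining operators are made explicit; regularity (trivial stabilizer) then reduces the whole analysis to the linear-group combinatorics, and Rodier's argument applies essentially verbatim.
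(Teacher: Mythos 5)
This proposition is not proved in the paper at all — it is quoted directly from \cite{Rod4, Ga6} — and your sketch is a faithful reconstruction of exactly that cited argument: Rodier's Jacquet-module/regularity analysis (geometric lemma, multiplicity-freeness from the trivial $W$-stabilizer, identification of the support blocks $W_S = \mca{C}_{S^\vee}$ via rank-one reducibility), with the genuinely metaplectic computations delegated to \cite{Ga6}, which is precisely where the paper delegates them. So the proposal is correct and takes essentially the same approach as the paper's source.
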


There are two special elements $\pi_\emptyset, \pi_\Delta \in {\rm JH}(I(\chi_{\rm ex}))$. First, $\pi_\emptyset$ is the covering analogue of the Steinberg representation and is the unique subrepresentation of $I(\chi_{\rm ex})$. On the other hand, $\pi_\Delta$ is the unique Langlands quotient of $I(\chi_{\rm ex})$, and is often called a theta representation of $\wt{G}^{(n)}$.

\subsection{The polynomial $\mca{P}_{G,S}(X)$}
Let $\tau_S:=(\pi_S)^I \in \Irr(\HH_{\bepsilon}(\wt{G}, I))$ be the irreducible representation associated with $\pi_S \in {\rm JH}(I(\chi_{\rm ex})), S \in \msc{P}(\Delta)$. Then one has
$$(\tau_S|_{\HH_W})_{q\to 1} =\varepsilon_W \otimes \sigma_{S^\vee},$$
which follows from \cite[Lemma 4.8]{Ga6}. Here $\sigma_{S^\vee}$ is the representation of $W$ associated with $\mca{C}_{S^\vee}$ as introduced in \S \ref{SS:sigS}.
 Thus, we have from \eqref{cal-Wd} that
\begin{equation} \label{E:key}
\Wh_\psi^{G,n}(\pi_S) \simeq \Hom_W(\eta_{\msc{X}_n}, \sigma_{S^\vee}).
\end{equation}
This isomorphism is pivotal since it connects the current and subsequent discussion to the content of \S \ref{S:cells}.

Denote by  $\chi_{\msc{X}_n}$ the character of $\eta_{\msc{X}_n}$. For oasitic $\wt{G}^{(n)}$, the value of $\chi_{\msc{X}_n}(w), w\in W$ is computed by Sommers in \cite{Som97}. More precisely, for every $w\in W$, as in \S \ref{S:Intro} we let $l^\sharp(w)$ denote the least number of reflections whose product is $w$ and consider
$$d(w):=\dim (Y\otimes \R)^w,$$
the dimension of the set of fixed points of $w$ in $Y\otimes \R$. One has $d(w)= r -l^\sharp(w)$. It was shown in \cite[Proposition 3.9]{Som97} that
$$\chi_{\msc{X}_n}(w)=n^{d(w)}$$
for every $w\in W$. In view of \eqref{E:exf1}, the character of $\sigma_{S^\vee}$ is always $\Z$-valued. Thus, for fixed $G$ and $S \subseteq \Delta$, the function 
$$\dim \Wh_\psi^{G,n}(\pi_S)=\dim \Hom_W(\eta_{\msc{X}_n}, \sigma_{S^\vee}) \in \Q[n]$$
is a rational polynomial in oasitic $n$ with coefficients depending only on $G$ and $S$.

\begin{dfn} \label{D:wdp}
For fixed simply-connected almost simple $G$ and fixed $S \subseteq \Delta$, we call the degree $r$ polynomial  $\mca{P}_{G,S}(X) \in \Q[X]$ such that $$\mca{P}_{G,S}(n)=\dim \Wh_\psi^{G,n}(\pi_S)$$
for all oasitic $n\in \N$ the Whittaker polynomial associated with $G$ and $S$.
\end{dfn}

As two examples, we have  that (see \cite[Theorem 3.1]{GGK2})
\begin{equation} \label{E:2extr}
\mca{P}_{G,\emptyset}(X)=\val{W}^{-1}\cdot \prod_{j=1}^r (X+m_j), \quad \mca{P}_{G,\Delta}(X)=\val{W}^{-1}\cdot \prod_{j=1}^r (X-m_j),
\end{equation}
where $m_j, 1\lest j \lest r$ are the exponents of the Weyl group. In particular, $\mca{P}_{G,\emptyset}(X)$ and $\mca{P}_{G,\Delta}(X)$ both split over $\Q$ and the roots are invariants determined by $W$. This motivates us to ask the following question as mentioned in \S \ref{S:Intro}:
\begin{enumerate}
\item[$\bullet$] (Q0) For fixed $G$, determine $S \in \msc{P}(\Delta)$ such that $\mca{P}_{G,S}(X) \in \Q[X]$ is a split polynomial over $\Q$.
\end{enumerate}

First, we give some observations:
\begin{lm} \label{L:obs}
\begin{enumerate}
\item[(i)] For every $S \in \msc{P}(\Delta)$, the equality 
$$\mca{P}_{G,S^*}(X) = (-1)^r \cdot \mca{P}_{G,S}(-X) \in \Q[X]$$ holds.
\item[(ii)] For every $S \in \msc{P}(\Delta^\vee) - \set{\emptyset}$, one has $(X-1) | \mca{P}_{G, S}(X)$.
\end{enumerate}
\end{lm}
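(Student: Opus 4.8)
The plan is to reduce both parts to the single character identity
\[
\mca{P}_{G,S}(X) = \frac{1}{\val{W}} \sum_{w\in W} X^{d(w)} \cdot \chi_{\sigma_{S^\vee}}(w) \in \Q[X].
\]
First I would derive this from \eqref{E:key} together with character orthogonality: for oasitic $n$,
\[
\dim \Wh_\psi^{G,n}(\pi_S) = \dim \Hom_W(\eta_{\msc{X}_n}, \sigma_{S^\vee}) = \frac{1}{\val{W}} \sum_{w\in W} \chi_{\msc{X}_n}(w) \cdot \chi_{\sigma_{S^\vee}}(w),
\]
and then substitute Sommers' evaluation $\chi_{\msc{X}_n}(w) = n^{d(w)}$ from \cite{Som97}. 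Since the displayed polynomial agrees with $\mca{P}_{G,S}$ at infinitely many oasitic values of $n$, the identity holds in $\Q[X]$.

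For (i), I would use the dual identification $(S^*)^\vee = (S^\vee)^*$ recorded in the text together with Proposition \ref{P:dual}, which yields $\sigma_{(S^\vee)^*} \simeq \varepsilon_W \otimes \sigma_{S^\vee}$ and hence $\chi_{\sigma_{(S^*)^\vee}}(w) = \varepsilon_W(w)\cdot \chi_{\sigma_{S^\vee}}(w)$. The crucial numerical input is the sign relation $\varepsilon_W(w) = \det(w) = (-1)^{l^\sharp(w)} = (-1)^{r - d(w)}$, where the last equality uses $d(w) = r - l^\sharp(w)$. Feeding this into the character identity for $S^*$ gives
\[
\mca{P}_{G,S^*}(X) = \frac{(-1)^r}{\val{W}} \sum_{w\in W} (-X)^{d(w)} \cdot \chi_{\sigma_{S^\vee}}(w) = (-1)^r \cdot \mca{P}_{G,S}(-X),
\]
as claimed.

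For (ii), I would simply evaluate the character identity at $X = 1$, obtaining
\[
\mca{P}_{G,S}(1) = \frac{1}{\val{W}} \sum_{w\in W} \chi_{\sigma_{S^\vee}}(w) = \dim \Hom_W(\mbm{1}_W, \sigma_{S^\vee}),
\]
the multiplicity of the trivial character in $\sigma_{S^\vee}$. (As a sanity check, $n=1$ is oasitic for every $G$ and $\msc{X}_1 = Y/Y$ is a point, so $\eta_{\msc{X}_1} = \mbm{1}_W$ directly.) It then remains to show this multiplicity vanishes whenever $S^\vee \ne \emptyset$. Applying the explicit decomposition \eqref{E:exf1} and Frobenius reciprocity,
\[
\dim \Hom_W\big(\mbm{1}_W, \Ind_{W(Z)}^W \varepsilon_{W(Z)}\big) = \dim \Hom_{W(Z)}\big(\mbm{1}_{W(Z)}, \varepsilon_{W(Z)}\big),
\]
which equals $1$ if $Z = \emptyset$ and $0$ otherwise. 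Since every index $Z$ occurring in \eqref{E:exf1} satisfies $S^\vee \subseteq Z$, no term contributes once $S^\vee \ne \emptyset$; hence $\mca{P}_{G,S}(1) = 0$ and $(X-1) \mid \mca{P}_{G,S}(X)$.

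Both arguments are mostly bookkeeping rather than hard analysis. The step requiring the most care is part (i): one must correctly commute the duality through the coroot identification $(S^*)^\vee = (S^\vee)^*$ and apply the sign relation $\varepsilon_W(w) = (-1)^{r - d(w)}$, since an error in either spoils the $(-1)^r$ normalization and the $X \mapsto -X$ substitution.
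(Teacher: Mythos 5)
Your proof is correct. For part (i) it is essentially the paper's own argument: both proofs pass through the identification $(S^*)^\vee = (S^\vee)^*$, the twist $\sigma_{(S^\vee)^*} \simeq \varepsilon_W \otimes \sigma_{S^\vee}$ from Proposition \ref{P:dual}, the sign identity $\varepsilon_W(w) = (-1)^r(-1)^{d(w)}$, and Sommers' evaluation $\chi_{\msc{X}_n}(w) = n^{d(w)}$, concluding by interpolation at the infinitely many oasitic $n$. For part (ii), however, you take a genuinely different route. The paper argues on the $p$-adic side: $n=1$ is oasitic, $\wt{G}^{(1)} = G$ is the underlying linear group, and for linear $G$ the Steinberg representation $\pi_\emptyset$ is the unique generic constituent of $I(\chi_{\rm ex})$; hence $\mca{P}_{G,S}(1) = \dim \Wh_\psi^{G,1}(\pi_S) = 0$ for all $S \ne \emptyset$. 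You instead stay entirely inside Weyl-group character theory: evaluating the character-sum identity at $X=1$ gives $\mca{P}_{G,S}(1) = \angb{\mbm{1}_W}{\sigma_{S^\vee}}_W$, and this vanishes because every inducing subset $Z$ appearing in \eqref{E:exf1} contains $S^\vee \ne \emptyset$, while Frobenius reciprocity gives
$$\angb{\mbm{1}_W}{\Ind_{W(Z)}^W \varepsilon_{W(Z)}}_W = \angb{\mbm{1}_{W(Z)}}{\varepsilon_{W(Z)}}_{W(Z)} = 0$$
for nontrivial $W(Z)$; the inner product is linear over virtual characters, so the signs in \eqref{E:exf1} cause no difficulty. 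Your version buys self-containedness --- it needs nothing from the covering-group side beyond the interpolation property defining $\mca{P}_{G,S}$ --- whereas the paper invokes the uniqueness of the generic constituent of a regular principal series for linear groups; the paper's version, in turn, is shorter and explains the vanishing conceptually as non-genericity of $\pi_S$ over the linear group. Note also that your observation that $n=1$ is oasitic for every $G$ is not merely a sanity check: it is exactly what legitimizes the evaluation $\mca{P}_{G,S}(1) = \dim\Wh_\psi^{G,1}(\pi_S)$ on which the paper's own one-line deduction rests.
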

\begin{proof}
The proof of (i) is essentially the same as given in the proof of \cite[Theorem 3.1]{GGK2}. We have $\varepsilon_W(w) = (-1)^r \cdot (-1)^{d(w)}$ and thus for all oasitic $n$, 
$$\begin{aligned}
\mca{P}_{G,S^*}(n) & = \angb{\eta_{\msc{X}_n}}{\sigma_{(S^*)^\vee}}_W = \angb{\eta_{\msc{X}_n}}{\sigma_{(S^\vee)^*}}_W  \\
& = \frac{1}{\val{W}} \sum_{w\in W} \varepsilon_W(w) \chi_{\sigma_{S^\vee}}(w) \chi_{\msc{X}_n}(w) \\
& = \frac{(-1)^r}{\val{W}} \sum_{w\in W} \chi_{\sigma_{S^\vee}}(w) (-n)^{d(w)} \\
& = (-1)^r \cdot \mca{P}_{G,S}(-n).
\end{aligned} $$
This gives (i).

For (ii), it suffices to show that $\mca{P}_{G, S}(1)=0$ for every $S \ne \emptyset$. But for $n=1$ and thus $\wt{G}^{(n)}=G$ is a linear algebraic group, the Steinberg representation $\pi_\emptyset$ is the only generic constituent in $I(\chi_{\rm ex})$ for $G$. This gives $\mca{P}_{G, S}(1)=0$ for all non-empty $S \subseteq \Delta$.
\end{proof}

\section{Splitting of $\mca{P}_{G,S}(X)$ over $\Q$} \label{S:splWhP} 

We set $\msc{P}(\Delta)_\flat \subseteq \msc{P}(\Delta)$ to be
\begin{equation} \label{D:Pflat}
\msc{P}(\Delta)_\flat:=
\begin{cases}
\set{S_j: 0 \lest j \lest r} \bigcup \set{T_j: 2\lest j \lest r-1} & \text{ for type $A_r$}, \\
\set{S_0, S_1, S_2, S_3} & \text{ for type $B_r$ and $C_r$}, \\
\set{S_0, S_1} & \text{ for type $D_r$}, \\
\set{S_0, S_1} & \text{ for type $G_2$},\\
\set{S_0} & \text{ for type $F_4, E_6, E_7, E_8$}.
\end{cases}
\end{equation}
Also denote 
\begin{equation} \label{D:Pflat*}
\msc{P}(\Delta)_\flat^*:=\set{S^*: \ S \in \msc{P}(\Delta)_\flat}.
\end{equation} 
The goal of this section is to prove the following:

\begin{thm} \label{T:poly}
Let $G$ be simply-connected and almost simple. For every $S \in \msc{P}(\Delta)_\flat \cup \msc{P}(\Delta)_\flat^*$, the Whittaker polynomial $\mca{P}_{G,S}(X) \in \Q[X]$ splits over $\Q$. 
\end{thm}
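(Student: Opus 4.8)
The plan is to combine the symmetry and divisibility recorded in Lemma \ref{L:obs} with the cell count of Theorem \ref{T:ABD0} and the explicit invariant factorizations of Gyoja--Nishiyama--Shimura \cite{GNS99}. First I would cut down the list of cases. By Lemma \ref{L:obs}(i) one has $\mca{P}_{G,S^*}(X) = (-1)^r\,\mca{P}_{G,S}(-X)$, and a polynomial in $\Q[X]$ splits over $\Q$ exactly when its image under $X \mapsto -X$ does; hence it suffices to treat $S \in \msc{P}(\Delta)_\flat$. For the exceptional types $F_4, E_6, E_7, E_8$ this leaves only $S_0 = \emptyset$, which splits by the first identity of \eqref{E:2extr}; for $G_2$ one adds $S_1$, read off from \cite{Ga6}. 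Thus the real content is types $A_r$, $B_r = C_r$ and $D_r$ for the subsets $S_j$ of \eqref{D:Pflat}, and throughout I use the canonical identification $S \leftrightarrow S^\vee$ to transport the cell data of \S\ref{S:cells} to the coroot side.

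Next I would set up the analytic expression. Substituting Sommers' formula $\chi_{\msc{X}_n}(w) = n^{d(w)}$ into \eqref{E:key} yields
\[
\mca{P}_{G,S}(n) = \sum_{\rho \in \Irr(W)} \mfr{m}(\rho, \sigma_{S^\vee}) \cdot P_\rho(n), \qquad P_\rho(X):=\frac{1}{\val{W}}\sum_{w\in W}\chi_\rho(w)\,X^{d(w)}.
\]
The input from \cite{GNS99} is a factorization of each relevant $P_\rho$ into $\Q$-linear factors, with roots dictated by the family of $\rho$ and the attached special orbit; the input from Theorem \ref{T:ABD0} is the precise set of families occurring in $\sigma_{S^\vee}$ --- those indexed by $\msc{N}^{\rm spe}(S^\vee)$ --- together with the bound $\val{\mfr{C}^{\rm LR}(S^\vee)}=\varphi_\diamondsuit(j)\lest 2$ for the $S_j \in \msc{P}(\Delta)_\flat$. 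I would also record the normalizing product identity
\[
\angb{\eta_{\msc{X}_n}}{\Ind_{W(Z^*)}^W \mbm{1}_{W(Z^*)}} = n^{\val{Z}}\cdot\frac{1}{\val{W(Z^*)}}\prod_{i}\big(n+m_i^{(Z^*)}\big),
\]
which follows by restricting $\eta_{\msc{X}_n}$ to the parabolic $W(Z^*)$ and applying the Shephard--Todd identity $\mca{P}_{W(Z^*)}^\sharp(X)=\prod_i(1+m_i^{(Z^*)}X)$ to that group, the $m_i^{(Z^*)}$ being the exponents of $W(Z^*)$.

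The argument then splits according to $\val{\mfr{C}^{\rm LR}(S^\vee)}$. When this equals $1$ (all of type $A_r$, and $S_0,S_1$ for $B_r$), the sum is supported on a single family: in type $A_r$ each family is one irreducible, so $\mca{P}_{G,S_j}$ is a scalar multiple of the single $P_\rho$ attached to the hook orbit $\mca{O}_{(r-j+1,1^j)}$ and its rational roots come straight from \cite{GNS99}, while for $B_r$ the within-family combination is assembled from the orbit $\mca{O}_{(2j+1,1^{2r-2j})}$. When $\val{\mfr{C}^{\rm LR}(S^\vee)}=2$ (type $B_r$ with $S_2,S_3$), $\mca{P}_{G,S}$ is a sum of two family contributions, attached to $\mca{O}_{\rm min}^{S^\vee}$ and $\mca{O}_{\rm max}^{S^\vee}$, whose $\bfa$-values differ by exactly $1$ by Lemma \ref{L:YN12} and Table \ref{T:B}; I would add the two \cite{GNS99} products and check that the total is again a product of $\Q$-linear factors, using Lemma \ref{L:obs}(ii) to fix the factor $(X-1)$. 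Type $D_r$ with $S_0, S_1$ I would instead treat directly: expand $\sigma_{S^\vee}$ by \eqref{E:exf2}, apply the displayed product identity term by term, and factor the resulting short alternating sum.

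The main obstacle is the two-family assembly: a sum of two polynomials that individually split over $\Q$ need not split, so the two families' roots must interlock rationally. This is precisely what \cite{GNS99} (explicit roots) and Theorem \ref{T:ABD0} (the two special orbits are adjacent in the closure order, with $\bfa$-values differing by $1$) together force, and making this interlocking explicit --- together with verifying the analogous cancellation by hand in type $D_r$ --- is where the genuine computation lies.
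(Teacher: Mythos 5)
Your proposal is correct and follows essentially the same route as the paper's proof: the same reduction via Lemma \ref{L:obs}(i), the same disposal of the exceptional types and $G_2$, the same use of the Gyoja--Nishiyama--Shimura family invariance and explicit product formulas for types $A_r$, $B_r$, $C_r$ (with the cell counts of Theorem \ref{T:ABD}), and the same direct computation for type $D_r$ via \eqref{E:exf2} and the parabolic restriction of $\eta_{\msc{X}_n}$ (Proposition \ref{P:D}), which the paper also uses precisely because the family-invariance argument fails in type $D_r$. The ``interlocking'' you defer as the main remaining computation is exactly what Proposition \ref{P:B-C} makes explicit: the two adjacent families' products share all linear factors except one each, so their positive combination is that common split factor times a single linear polynomial $(c_{G,S_j}X+d_{G,S_j})$.
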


In view of the relation between $\mca{P}_{G,S}(X)$ and $\mca{P}_{G,S^*}(X)$ in Lemma \ref{L:obs}, it suffices to prove Theorem \ref{T:poly} for $S \in \msc{P}(\Delta)_\flat$. If $G$ is of type $F_4, E_i, 6 \lest i \lest 8$, then the result trivially follows from \eqref{E:2extr}. For other types, we will give a case by case discussion. Thus, Theorem \ref{T:poly} follows from an amalgamation of Propositions \ref{P:A-poly}, \ref{P:BC-poly}, \ref{P:D-poly} and Table \ref{T:G2} below, which actually give more information on the explicit forms of $\mca{P}_{G,S}(X)$.

Before we give a detailed discussion, we note that our proof for type $A_r, B_r, C_r$ relies on the work of Gyoja--Nishiyama--Shimura \cite{GNS99}, which roughly asserts that if $\sigma, \sigma' \in \Irr(W)$ are in the same family, i.e., $\sigma \sim_{\rm LR} \sigma'$ as in Definition \ref{D:LR},  then 
$$\dim \Hom_W(\eta_{\msc{X}_n}, \sigma) = \dim \Hom_W(\eta_{\msc{X}_n}, \sigma').$$
Following the notation in \eqref{E:IrrS},
$$\sigma_{S^\vee} = \bigoplus_{\theta \in \Irr(W)^{\rm spe}_{S^\vee}} \bigoplus_{\substack{\rho \in \Irr(W) \\ \rho^\sharp = \theta}} \mfr{m}(\rho, \sigma_{S^\vee}) \cdot \rho,$$
thus  for $G$ of type $A_r, B_r$ and $C_r$ one has
\begin{equation} \label{E:red}
\mca{P}_{G,S}(n)=\sum_{\theta \in \Irr(W)^{\rm spe}_{S^\vee}} \Big( \sum_{\substack{\rho \in \Irr(W) \\ \rho^\sharp = \theta}} \mfr{m}(\rho, \sigma_{S^\vee}) \Big) \cdot \dim \Hom_W(\eta_{\msc{X}_n}, \theta).
\end{equation}
The value of $\dim \Hom_W(\eta_{\msc{X}_n}, \theta)$ for every $\theta \in \Irr(W)$ (not necessarily special) is already computed in \cite{GNS99}. We see that in order for $\mca{P}_{G,S}(X)$ to be splitting over $\Q$, it is desirable to have $\Irr(W)^{\rm spe}_{S^\vee}$ to be of small size, or equivalently, for $\mfr{C}^{\rm LR}(S^\vee)$ to be small, in view of \eqref{E:bij-s}. 

\subsection{Type $A_r$}
Recall that $S_j:=\set{\alpha_1, ..., \alpha_j}$ and $T_j:=S_j - \set{\alpha_1} = \set{\alpha_2, \alpha_3, ..., \alpha_j}$. First, we have
$$\sigma_{S_j^\vee} = j_{W(S_j^\vee)}^W \varepsilon_{W(S_j^\vee)},$$
where the right hand denotes the Lusztig--Spaltenstein $j$-induction from the parabolic Weyl subgroup $W(S_j)$. Indeed, this just follows from Proposition \ref{P:A} (or equivalently, Table \ref{T:A}) and the Springer correspondence for type $A_r$.

\begin{prop} \label{P:A-poly}
Let $G=\SL_{r+1}$. For $S_j, 0\lest j \lest r$ as above one has
$$\mca{P}_{G,S_j}(X)=\frac{\dim \sigma_{S_j^\vee}}{\val{W}} \cdot \prod_{a=1}^{r-j}(X+a) \cdot \prod_{b=1}^{j}(X-b) \in \Q[X].$$
Also, for $T_{j}$ with $2\lest j \lest r-1$ one has $\mfr{C}^{\rm LR}(T_j^\vee)=\set{\mca{C}_1, \mca{C}_2}$, whose associated special Weyl group representations are $\sigma_{\mca{C}_1}=(r-j+2, 1^{j-1})$ and $\sigma_{\mca{C}_2}=(r-j+1, 2,1^{j-2})$. In this case, we have
$$\mca{P}_{G, T_j}(X)= c_{G, T_j} \cdot (X+ r-j+1)\cdot \prod_{a=1}^{r-j}(X+a) \cdot \prod_{b=1}^{j-1}(X- b) \in \Q[X],$$
where $c_{G,T_j} \in \Q^\times$ depends only on $G$ and $T_j$.
\end{prop}
\begin{proof}
In the standard parametrization of $\Irr(W(A_r))$ using partitions of $r+1$ (see \cite[\S 5.4]{GePf}), the representation $\sigma_{S_j^\vee}$ corresponds to the partition $(r-j+1, 1^j)$, where the correspondence is normalized such that $\mbm{1}_W$ and $\varepsilon_W$ correspond to the partitions $(r+1), (1^{r+1})$ respectively. The result then follows from \cite[Proposition 3.1, (3.3)]{GNS99}. We note that in the notation of loc. cit., the term $t$ should be removed from the formula for $\tau^*(\chi^\alpha; t)$ in (3.3) there.

We consider $T_j$ now. The first assertion follows from Proposition \ref{P:Amore}. This shows that for every irreducible constituent $\sigma \subset \sigma_{T^{\vee}_j}$, one has $\sigma \sim_{\rm LR} \sigma_{\mca{C}_1}$ or $\sigma \sim_{\rm LR} \sigma_{\mca{C}_2}$. We have from \cite[Proposition 3.1]{GNS99} that 
$$\frac{ \angb{\eta_{\msc{X}_n}}{\sigma} }{ \dim \sigma } = \frac{ \angb{\eta_{\msc{X}_n}}{\sigma_{\mca{C}_i}}  }{ \dim \sigma_{\mca{C}_i} }$$
whenever $\sigma \sim_{\rm LR} \sigma_{\mca{C}_i}, 1\lest i \lest 2$, and moreover
$$\angb{\eta_{\msc{X}_n}}{\sigma_{\mca{C}_1}} =\frac{\dim \sigma_{\mca{C}_1}}{\val{W}} \cdot \prod_{a=1}^{r-j+1}(n+a) \cdot \prod_{b=1}^{j-1}(n- b)$$
and 
$$\angb{\eta_{\msc{X}_n}}{\sigma_{\mca{C}_2}} =\frac{\dim \sigma_{\mca{C}_2}}{\val{W}} \cdot \prod_{a=0}^{r-j}(n+a) \cdot \prod_{b=1}^{j-1}(n- b).$$
This gives the formula for $\mca{P}_{G,T_j}(X)$ in view of  \eqref{E:red}.
\end{proof}

\subsection{Type $B_r$ and $C_r$}
We consider $G=\Spin_{2r+1}$ and $\Sp_{2r}$ with $\Delta$ of type $B_r$ and $C_r$ respectively. In view of Lemma \ref{L:obs}, it suffices to consider either $S_j$ or $S_j^*$ with $j\in \set{1, 2, 3}$. Note that the Weyl groups of type $B_r$ and $C_r$ are naturally isomorphic and every $\sigma \in \Irr(W)$ is parametrized by bipartition $(\xi; \eta)$  such that $\val{\xi} + \val{\eta} = r$, see \cite[\S 11.4]{Car}.

\begin{prop} \label{P:BC-poly}
Let $G$ be $\Spin_{2r+1}$ or $\Sp_{2r}$ and consider $S_j, j \in \set{1, 2, 3} \subseteq \Delta$.
\begin{enumerate}
\item[(i)] If $j=1$, then $\mfr{C}^{\rm LR}((S_1^\vee)^*)=\set{\mca{C}}$ is a singleton and the special Weyl group representation $\sigma_\mca{C}$ associated with $\mca{C}$ has bipartition $(1; 1^{r-1})$. In this case, we have
$$\mca{P}_{G, S_1^*}(X) = c_{G, S_1}\cdot (X+1) \prod_{a=1}^{r-1}(X-(2a-1)) \in \Q[X],$$
where $c_{G, S_1} \in \Q^\times$ depends only on $G$ and $S_1$.
\item[(ii)] If $j=2$, then $\mfr{C}^{\rm LR}((S_2^\vee)^*)=\set{\mca{C}_1, \mca{C}_2}$, whose associated special Weyl group representations are $\sigma_{\mca{C}_1}=(2; 1^{r-2})$ and $\sigma_{\mca{C}_2}=(1; 2,1^{r-2})$. In this case, we have
$$\mca{P}_{G, S_2^*}(X)= (c_{G,S_2} \cdot X + d_{G,S_2}) \cdot (X+1) \cdot \prod_{a=1}^{r-2}(X-(2a-1)) \in \Q[X],$$
where $c_{G,S_2} \in \Q^\times, d_{G,S_2} \in \Q$ depend only on $G, S_2$.
\item[(iii)] If $j=3$, then $\mfr{C}^{\rm LR}((S_3^\vee)^*)=\set{\mca{C}_1, \mca{C}_2}$, whose associated special Weyl group representations are $\sigma_{\mca{C}_1}=(3; 1^{r-3})$ and $\sigma_{\mca{C}_2}=(2; 1^{r-2})$. In this case, we have
$$\mca{P}_{G, S_3^*}(X)= (c_{G,S_3} \cdot X + d_{G,S_3}) \cdot (X+1) (X+3) \cdot \prod_{a=1}^{r-3}(X-(2a-1)) \in \Q[X],$$
where $c_{G,S_3} \in \Q^\times, d_{G,S_3} \in \Q$ depend only on $G, S_3$.
\end{enumerate}
\end{prop}
\begin{proof}
Note that for both type $B_r$ and $C_r$, the number $n$ is oasitic if and only if $n$ is odd, see Table \ref{T:oas}.
In view of the canonical identification $W(B_r) \simeq W(C_r)$ and \eqref{E:key}, we have
$$\dim \Wh_\psi^{\Spin_{2r+1}, n}(\pi_{S_j^\vee}) = \dim \Wh_\psi^{\Sp_{2r}, n}(\pi_{S_j^\vee})$$
for every $0\lest j \lest r$ and all oasitic $n$. Thus, 
$$\mca{P}_{\Spin_{2r+1}, S_j}(X) = \mca{P}_{\Sp_{2r}, S_j}(X) \in \Q[X]$$
for all $j$. Note also that $\mfr{C}^{\rm LR}(S_j)$ is independent of whether $\Delta$ is of type $B_r$ or $C_r$. Therefore, it suffices to consider the case when $G=\Sp_{2r}$ and thus $\Delta^\vee$ is of type $B_r$. We assume this for the rest of the proof.

We first show (i). The fact that $\mfr{C}^{\rm LR}((S_1^\vee)^*)$ is a singleton set $\set{\mca{C}}$ follows from Table \ref{T:B}. The two-sided cell is then the unique one containing $s_{\alpha_1^\vee} w_{\Delta^\vee}$. It also follows from loc. cit. that
$$\sigma_\mca{C} = (1; 1^{r-1}) \in \Irr(W)^{\rm spe}.$$
Let $\sigma \subset \sigma_{S_1^\vee}$ be any irreducible constituent. Then $\sigma \sim_{\rm LR} \sigma_\mca{C}$, and it follows from \cite[Proposition 3.3]{GNS99} that
$$\frac{ \angb{\eta_{\msc{X}_n}}{\sigma}}{ \dim \sigma } = \frac{ \angb{\eta_{\msc{X}_n}}{\sigma_\mca{C}}}{ \dim \sigma_\mca{C} }  = \frac{1}{\val{W}}  \cdot (n+1)\cdot \prod_{a=1}^{r-1}(n-(2a-1)),$$
the result for (i) follows in view of \eqref{E:red}.

The proof of (ii) and (iii) follows from the same idea for (i). For $S_2$, it follows from Table \ref{T:B} that there are two two-sided cells that intersect nontrivially with $\mfr{C}^{\rm LR}(S_2^\vee)$, say $\mca{C}_1, \mca{C}_2$. Here we may assume that $\mca{C}_1$ is the unique one containing $w_{(S_2^\vee)^*}$ and $\mca{C}_2$ the one containing $w_{S_2^\vee} w_{\Delta^\vee}$. Again, the special representations associated to $\mca{C}_1, \mca{C}_2$ are
$$\sigma_{\mca{C}_1}=(2; 1^{r-2}), \quad \sigma_{\mca{C}_2} = (1; 2,1^{r-2}).$$
For every irreducible constituent $\sigma \subset \sigma_S$, one has $\sigma \sim_{\rm LR} \sigma_{\mca{C}_1}$ or $\sigma \sim_{\rm LR} \sigma_{\mca{C}_2}$. We have from \cite[Proposition 3.3]{GNS99} that 
$$\frac{ \angb{\eta_{\msc{X}_n}}{\sigma} }{ \dim \sigma } = \frac{ \angb{\eta_{\msc{X}_n}}{\sigma_{\mca{C}_i}}  }{ \dim \sigma_{\mca{C}_i} }$$
whenever $\sigma \sim_{\rm LR} \sigma_{\mca{C}_i}, 1\lest i \lest 2$, and moreover
$$\angb{\eta_{\msc{X}_n}}{\sigma_{\mca{C}_1}} =\frac{\dim \sigma_{\mca{C}_1}}{\val{W}} \cdot (n+3) \cdot (n+1) \cdot \prod_{a=1}^{r-2}(n-(2a-1))$$
and 
$$\angb{\eta_{\msc{X}_n}}{\sigma_{\mca{C}_2}} =\frac{\dim \sigma_{\mca{C}_2}}{\val{W}} \cdot (n+1) \cdot (n+1) \cdot \prod_{a=1}^{r-2}(n-(2a-1)).$$
This gives the formula for $\mca{P}_{G,S_2^*}(X)$ in view of  \eqref{E:red}.

For (iii), the two two-sided cells $\mca{C}_1$ and $\mca{C}_2$ are associated with $w_{(S_3^\vee)^*}$ and $w_{(S_3^\vee)} w_{\Delta^\vee}$ respectively. Then the computation of $\sigma_{\mca{C}_i}, i =1, 2$ and the rest of the argument are in complete parallel as case (ii). We omit the details.
\end{proof}

We note that the above formula for $\mca{P}_{G, S_j^*}(X), 1\lest j \lest 3$ immediately gives that for  $\mca{P}_{G, S_j}(X)$ in view of the ``functional equation" in Lemma \ref{L:obs} (i), which in fact also follows from \cite[Proposition 1.4]{GNS99}.

\subsection{Type $D_r$} For type $D_r$, it suffices to consider $S_1=\set{\alpha_1}$. However, we can not argue as in the $B_r$, since it is possible that $\sigma_1 \sim_{\rm LR} \sigma_2$ in $\Irr(W)$, but $\angb{\eta_{\msc{X}_n}}{\sigma_1}/\dim \sigma_1 \ne  \angb{\eta_{\msc{X}_n}}{\sigma_2}/\dim \sigma_2$, see the discussion on \cite[Page 17]{GNS99}. Instead, we will do a direct computation using \eqref{E:exf1} for type $D_r$.

\begin{prop} \label{P:D-poly}
For $G=\Spin_{2r}$ with $\Delta$ of type $D_r$ and $S_1=\set{\alpha_1}$, we have 
$$\mca{P}_{G, S_1}(X) = \frac{(X-1)((2r-1)X + (r-1)(2r-3))}{ \val{W} } \cdot \prod_{a=1}^{r-2} (X + 2a-1) \in \Q[X].$$
\end{prop}
\begin{proof}
As mentioned, we will have a direct computation using the equality
$$\sigma_{S_1^\vee}=\Ind_{W((S_1^\vee)^*)}^W \mbm{1}_{W((S_1^\vee)^*)} - \mbm{1}_W,$$
which gives
$$\angb{\sigma_{S_1^\vee}}{\eta_{\msc{X}_n}} = \angb{\mbm{1}_{W((S_1^\vee)^*)}}{ \eta_{\msc{X}_n}|_{W((S_1^\vee)^*)} }_{W((S_1^\vee)^*)} - \angb{\mbm{1}_W}{\eta_{\msc{X}_n}}_W.$$
We have
$$\angb{\mbm{1}_{W((S_1^\vee)^*)}}{ \eta_{\msc{X}_n}|_{W((S_1^\vee)^*)} }_{W((S_1^\vee)^*)} = \frac{1}{\val{W((S_1^\vee)^*)}} \sum_{w\in W((S_1^\vee)^*)} \chi_{\msc{X}_n}(w).$$
Let $Y':=\Z[(S_1^\vee)^*] \subset Y$ be the coroot lattice of type $D_{r-1}$. We have $Y' \cap nY = nY'$ and we set 
$$\msc{X}'_n:=Y'/(Y' \cap nY).$$
Similar to the action of $W=W(\Delta^\vee)$ on $\msc{X}_n$, the group $W((S_1^\vee)^*)$ acts naturally on $\msc{X}'_n$ and gives a permutation representation $\eta_{\msc{X}'_n}$. Denote by $\chi_{\msc{X}'_n}$ the character of $\eta_{\msc{X}'_n}$. It follows from \cite[Proposition 3.9]{Som97} that for oasitic $n$ one has
$$\chi_{\msc{X}_n}(w) = n\cdot \chi_{\msc{X}'_n}(w)$$
for every $w\in W((S_1^\vee)^*)$.

We have from \eqref{E:2extr} that
$$\angb{\mbm{1}_W}{\eta_{\msc{X}_n}}_W = \frac{1}{\val{W}} \cdot \prod_{j=1}^r (n + m_j),$$
where $m_j$ are the exponents of $W$. Similarly,
$$\sum_{w\in W((S_1^\vee)^*)} \chi_{\msc{X}'_n}(w) = \prod_{j=1}^{r-1}(n+m_j'),$$
where $m_j'$ are the exponents of the Weyl group $W((S_1^\vee)^*)$. Now the formula for $\angb{\sigma_{S_1^\vee}}{ \eta_{\msc{X}_n} }_W$ follows easily from the above and a simplification.
\end{proof}

\begin{rmk}
The above approach of a direct computation for type $D_r$ is clearly applicable to type $B_r$ or $C_r$. For example, for $G=\Spin_{2r+1}$ or $\Sp_{2r}$ and $S_1= \set{\alpha_1}$, we have
$$\mca{P}_{G, S_1}(X) = \frac{2r-1}{\val{W}} \cdot (X-1) \prod_{a=1}^{r-1} (X + 2a-1) \in \Q[X],$$
which is clearly compatible with (in view of Lemma \ref{L:obs} (i)) and is a refinement of Proposition \ref{P:BC-poly} (i). In fact, it is also possible to compute directly for $S_2, S_3$ for $\Delta$ of type $B_r, C_r$. 

On the other hand, we note that for type $A_r$ and general $S_j$, the computation of $\mca{P}_{G, S_j}(n) \in \Q[n]$ using the two-sided cells consideration is much more efficient than using the formula in \eqref{E:exf1}.
\end{rmk}

\subsection{Type $G_2$}
For $\Delta$ of the exceptional type $G_2$, the Whittaker polynomial $\mca{P}_{G, S}(X), S \in \msc{P}(\Delta)$ follows directly from \cite[Page 355, Table 9]{Ga6}. We reproduce the result in Table \ref{T:G2} below for completeness.

\begin{table}[!htbp]  
\caption{$\mca{P}_{G,S}(X)$ for $S \subseteq \Delta$}
\label{T:G2}
\vskip 5pt
\renewcommand{\arraystretch}{1.4}
\begin{tabular}{|c|c|c|c|c|}
\hline
$S$ & $\emptyset$  &  $\set{\alpha_1} $ & $\set{\alpha_2}$  & $\Delta$  \\
\hline
\hline
$12\cdot \mca{P}_{G,S}(X)$ & $ (X+1)(X+5)$ & $5(X^2-1)$  & $5(X^2-1)$ & $(X-1)(X-5)$ \\ 
\hline
\end{tabular}
\end{table}

\subsection{A speculation} \label{SS:spec}
We expect that the cases discussed above when the polynomial $\mca{P}_{G,S}(X) \in \Q[X]$ splits over $\Q$ is exhaustive. More precisely, we expect the following:

\begin{specu}
Let $G$ be almost simple and simply-connected with root system $\Delta$. Let $S \in \msc{P}(\Delta)$. Then $\mca{P}_{G,S}(X) \in \Q[X]$ splits over $\Q$ if and only if $S \in \mca{P}(\Delta)_\flat \cup \mca{P}(\Delta)_\flat^*$.
\end{specu}

Indeed, in quite many cases, different two-sided cells contribute to different polynomials in $\Q[X]$ in view of the results of \cite{GNS99}. Thus, if $\mfr{C}^{\rm LR}(S)$ is big, then it is more probable to have the non-splitting property of $\mca{P}_{G,S}(X)$: this together with some other numerical computations are the raisons d'\^etre of the above speculation.


\begin{bibdiv}
\begin{biblist}[\resetbiblist{9999999}]*{labels={alphabetic}}


\bib{ARR15}{article}{
  author={Armstrong, Drew},
  author={Reiner, Victor},
  author={Rhoades, Brendon},
  title={Parking spaces},
  journal={Adv. Math.},
  volume={269},
  date={2015},
  pages={647--706},
  issn={0001-8708},
  review={\MR {3281144}},
  doi={10.1016/j.aim.2014.10.012},
}

\bib{BX19}{article}{
  author={Bai, Zhanqiang},
  author={Xie, Xun},
  title={Gelfand-Kirillov dimensions of highest weight Harish-Chandra modules for $SU(p,q)$},
  journal={Int. Math. Res. Not. IMRN},
  date={2019},
  number={14},
  pages={4392--4418},
  issn={1073-7928},
  review={\MR {3984073}},
  doi={10.1093/imrn/rnx247},
}

\bib{BXX23}{article}{
  author={Bai, Zhanqiang},
  author={Xiao, Wei},
  author={Xie, Xun},
  title={Gelfand--Kirillov Dimensions and Associated Varieties of Highest Weight Modules},
  journal={Int. Math. Res. Not. IMRN},
  date={2023},
  number={10},
  pages={8101--8142},
  issn={1073-7928},
  review={\MR {4589071}},
  doi={10.1093/imrn/rnac081},
}

\bib{BMW}{article}{
  author={Bai, Zhanqiang},
  author={Ma, Jiajun},
  author={Wang, Yutong},
  title={A combinatorial characterization of the annihilator varieties of highest weight modules for classical Lie algebras},
  status={preprint, available at https://arxiv.org/abs/2304.03475v1},
}

\bib{BV4}{article}{
  author={Barbasch, Dan},
  author={Vogan, David},
  title={Primitive ideals and orbital integrals in complex classical groups},
  journal={Math. Ann.},
  volume={259},
  date={1982},
  number={2},
  pages={153--199},
  issn={0025-5831},
  review={\MR {656661}},
}

\bib{BV5}{article}{
  author={Barbasch, Dan},
  author={Vogan, David},
  title={Primitive ideals and orbital integrals in complex exceptional groups},
  journal={J. Algebra},
  volume={80},
  date={1983},
  number={2},
  pages={350--382},
  issn={0021-8693},
  review={\MR {691809}},
}

\bib{Bez09}{article}{
  author={Bezrukavnikov, Roman},
  title={Perverse sheaves on affine flags and nilpotent cone of the Langlands dual group},
  journal={Israel J. Math.},
  volume={170},
  date={2009},
  pages={185--206},
  issn={0021-2172},
  review={\MR {2506323}},
  doi={10.1007/s11856-009-0025-x},
}

\bib{BB05-B}{book}{
  author={Bj\"orner, Anders},
  author={Brenti, Francesco},
  title={Combinatorics of Coxeter groups},
  series={Graduate Texts in Mathematics},
  volume={231},
  publisher={Springer, New York},
  date={2005},
  pages={xiv+363},
  isbn={978-3540-442387},
  isbn={3-540-44238-3},
  review={\MR {2133266}},
}

\bib{BouL2}{book}{
  author={Bourbaki, Nicolas},
  title={Lie groups and Lie algebras. Chapters 4--6},
  series={Elements of Mathematics (Berlin)},
  note={Translated from the 1968 French original by Andrew Pressley},
  publisher={Springer-Verlag, Berlin},
  date={2002},
  pages={xii+300},
  isbn={3-540-42650-7},
  review={\MR {1890629}},
  doi={10.1007/978-3-540-89394-3},
}

\bib{BD}{article}{
  author={Brylinski, Jean-Luc},
  author={Deligne, Pierre},
  title={Central extensions of reductive groups by $\bold K_2$},
  journal={Publ. Math. Inst. Hautes \'Etudes Sci.},
  number={94},
  date={2001},
  pages={5--85},
  issn={0073-8301},
  review={\MR {1896177}},
  doi={10.1007/s10240-001-8192-2},
}

\bib{Car}{book}{
  author={Carter, Roger W.},
  title={Finite groups of Lie type},
  series={Wiley Classics Library},
  note={Conjugacy classes and complex characters; Reprint of the 1985 original; A Wiley-Interscience Publication},
  publisher={John Wiley \& Sons, Ltd., Chichester},
  date={1993},
  pages={xii+544},
  isbn={0-471-94109-3},
  review={\MR {1266626}},
}

\bib{GG}{article}{
  author={Gan, Wee Teck},
  author={Gao, Fan},
  title={The Langlands-Weissman program for Brylinski-Deligne extensions},
  language={English, with English and French summaries},
  note={L-groups and the Langlands program for covering groups},
  journal={Ast\'erisque},
  date={2018},
  number={398},
  pages={187--275},
  issn={0303-1179},
  isbn={978-2-85629-845-9},
  review={\MR {3802419}},
}

\bib{Ga6}{article}{
  author={Gao, Fan},
  title={Kazhdan--Lusztig representations and Whittaker space of some genuine representations},
  journal={Math. Ann.},
  volume={376},
  date={2020},
  number={1},
  pages={289--358},
  issn={0025-5831},
  review={\MR {1464132}},
  doi={10.1007/s00208-019-01925-1},
}

\bib{GGK1}{article}{
  author={Gao, Fan},
  author={Gurevich, Nadya},
  author={Karasiewicz, Edmund},
  title={Genuine pro-$p$ Iwahori--Hecke algebras, Gelfand--Graev representations, and some applications},
  status={J. Eur. Math. Soc. (2024, 68 pages, accepted), available at https://arxiv.org/abs/2204.13053},
}

\bib{GGK2}{article}{
  author={Gao, Fan},
  author={Gurevich, Nadya},
  author={Karasiewicz, Edmund},
  title={Genuine Gelfand--Graev functor and the quantum affine Schur--Weyl duality},
  status={preprint (2023, 25 pages, submitted), available at https://arxiv.org/abs/2210.16138},
}

\bib{Gec12}{article}{
  author={Geck, Meinolf},
  title={On the Kazhdan-Lusztig order on cells and families},
  journal={Comment. Math. Helv.},
  volume={87},
  date={2012},
  number={4},
  pages={905--927},
  issn={0010-2571},
  review={\MR {2984576}},
  doi={10.4171/CMH/273},
}

\bib{GePf}{book}{
  author={Geck, Meinolf},
  author={Pfeiffer, G\"{o}tz},
  title={Characters of finite Coxeter groups and Iwahori-Hecke algebras},
  series={London Mathematical Society Monographs. New Series},
  volume={21},
  publisher={The Clarendon Press, Oxford University Press, New York},
  date={2000},
  pages={xvi+446},
  isbn={0-19-850250-8},
  review={\MR {1778802}},
}

\bib{GNS99}{article}{
  author={Gyoja, Akihiko},
  author={Nishiyama, Kyo},
  author={Shimura, Hiroyuki},
  title={Invariants for representations of Weyl groups and two-sided cells},
  journal={J. Math. Soc. Japan},
  volume={51},
  date={1999},
  number={1},
  pages={1--34},
  issn={0025-5645},
  review={\MR {1661012}},
  doi={10.2969/jmsj/05110001},
}

\bib{KL1}{article}{
  author={Kazhdan, David},
  author={Lusztig, George},
  title={Representations of Coxeter groups and Hecke algebras},
  journal={Invent. Math.},
  volume={53},
  date={1979},
  number={2},
  pages={165--184},
  issn={0020-9910},
  review={\MR {560412}},
}

\bib{Kim18}{article}{
  author={Kim, Dongkwan},
  title={On total Springer representations for classical types},
  journal={Selecta Math. (N.S.)},
  volume={24},
  date={2018},
  number={5},
  pages={4141--4196},
  issn={1022-1824},
  review={\MR {3874692}},
  doi={10.1007/s00029-018-0438-7},
}

\bib{Lus79}{article}{
  author={Lusztig, G.},
  title={A class of irreducible representations of a Weyl group},
  journal={Nederl. Akad. Wetensch. Indag. Math.},
  volume={41},
  date={1979},
  number={3},
  pages={323--335},
  issn={0019-3577},
  review={\MR {546372}},
}

\bib{Lus82}{article}{
  author={Lusztig, George},
  title={A class of irreducible representations of a Weyl group. II},
  journal={Nederl. Akad. Wetensch. Indag. Math.},
  volume={44},
  date={1982},
  number={2},
  pages={219--226},
  issn={0019-3577},
  review={\MR {0662657}},
}

\bib{Lus84-B}{book}{
  author={Lusztig, George},
  title={Characters of reductive groups over a finite field},
  series={Annals of Mathematics Studies},
  volume={107},
  publisher={Princeton University Press, Princeton, NJ},
  date={1984},
  pages={xxi+384},
  isbn={0-691-08350-9},
  isbn={0-691-08351-7},
  review={\MR {742472}},
}

\bib{Lus-CAW1}{article}{
  author={Lusztig, George},
  title={Cells in affine Weyl groups},
  conference={ title={Algebraic groups and related topics}, address={Kyoto/Nagoya}, date={1983}, },
  book={ series={Adv. Stud. Pure Math.}, volume={6}, publisher={North-Holland, Amsterdam}, },
  isbn={0-444-87711-8},
  date={1985},
  pages={255--287},
  review={\MR {0803338}},
  doi={10.2969/aspm/00610255},
}

\bib{Lus-CSV}{article}{
  author={Lusztig, George},
  title={Character sheaves. V},
  journal={Adv. in Math.},
  volume={61},
  date={1986},
  number={2},
  pages={103--155},
  issn={0001-8708},
  review={\MR {0849848}},
  doi={10.1016/0001-8708(86)90071-X},
}

\bib{Lus-CAW2}{article}{
  author={Lusztig, George},
  title={Cells in affine Weyl groups. II},
  journal={J. Algebra},
  volume={109},
  date={1987},
  number={2},
  pages={536--548},
  issn={0021-8693},
  review={\MR {0902967}},
  doi={10.1016/0021-8693(87)90154-2},
}

\bib{Lus03-B}{book}{
  author={Lusztig, George},
  title={Hecke algebras with unequal parameters},
  series={CRM Monograph Series},
  volume={18},
  publisher={American Mathematical Society, Providence, RI},
  date={2003},
  pages={vi+136},
  isbn={0-8218-3356-1},
  review={\MR {1974442}},
}

\bib{Rod4}{article}{
  author={Rodier, Fran\c {c}ois},
  title={D\'ecomposition de la s\'erie principale des groupes r\'eductifs $p$-adiques},
  language={French},
  conference={ title={Noncommutative harmonic analysis and Lie groups}, address={Marseille}, date={1980}, },
  book={ series={Lecture Notes in Math.}, volume={880}, publisher={Springer, Berlin-New York}, },
  date={1981},
  pages={408--424},
  review={\MR {644842}},
}

\bib{Sag01-B}{book}{
  author={Sagan, Bruce E.},
  title={The symmetric group},
  series={Graduate Texts in Mathematics},
  volume={203},
  edition={2},
  note={Representations, combinatorial algorithms, and symmetric functions},
  publisher={Springer-Verlag, New York},
  date={2001},
  pages={xvi+238},
  isbn={0-387-95067-2},
  review={\MR {1824028}},
  doi={10.1007/978-1-4757-6804-6},
}

\bib{ShTo54}{article}{
  author={Shephard, G. C.},
  author={Todd, J. A.},
  title={Finite unitary reflection groups},
  journal={Canad. J. Math.},
  volume={6},
  date={1954},
  pages={274--304},
  issn={0008-414X},
  review={\MR {0059914}},
  doi={10.4153/cjm-1954-028-3},
}

\bib{Sho88}{article}{
  author={Shoji, Toshiaki},
  title={Geometry of orbits and Springer correspondence},
  note={Orbites unipotentes et repr\'{e}sentations, I},
  journal={Ast\'{e}risque},
  number={168},
  date={1988},
  pages={9, 61--140},
  issn={0303-1179},
  review={\MR {1021493}},
}

\bib{Som97}{article}{
  author={Sommers, Eric},
  title={A family of affine Weyl group representations},
  journal={Transform. Groups},
  volume={2},
  date={1997},
  number={4},
  pages={375--390},
  issn={1083-4362},
  review={\MR {1486037}},
  doi={10.1007/BF01234541},
}

\bib{Spr76}{article}{
  author={Springer, T. A.},
  title={Trigonometric sums, Green functions of finite groups and representations of Weyl groups},
  journal={Invent. Math.},
  volume={36},
  date={1976},
  pages={173--207},
  issn={0020-9910},
  review={\MR {0442103}},
  doi={10.1007/BF01390009},
}

\bib{We6}{article}{
  author={Weissman, Martin H.},
  title={L-groups and parameters for covering groups},
  language={English, with English and French summaries},
  note={L-groups and the Langlands program for covering groups},
  journal={Ast\'erisque},
  date={2018},
  number={398},
  pages={33--186},
  issn={0303-1179},
  isbn={978-2-85629-845-9},
  review={\MR {3802418}},
}

\end{biblist}
\end{bibdiv}

\end{document}